\documentclass[twocolumn]{article}
\usepackage{graphicx}

\usepackage[ruled,vlined]{algorithm2e}
\usepackage[
  top=2cm,
  bottom=2cm,
  left=1.7cm,
  right=1.7cm
]{geometry}

\usepackage[cmex10]{amsmath}
\usepackage{amsfonts}
\usepackage{amssymb}
\usepackage{mathtools}
\usepackage{nicefrac} 
\usepackage{mathrsfs}
\usepackage{arydshln}
\usepackage{import}
\usepackage{xifthen}
\usepackage{pdfpages}
\usepackage{transparent}
\usepackage{hyperref}
\usepackage{amsthm}

\usepackage{nicematrix}

\DeclareMathOperator{\edgegraph}{edge}
\DeclareMathOperator{\matmap}{M}

\newtheorem{theorem}{Theorem}
\newtheorem{lemma}{Lemma}
\newtheorem{remark}{Remark}
\newtheorem{definition}{Definition}

\newcommand{\cfunof}[1]{\ensuremath{\left\{#1\right\}}}

\newcommand{\funof}[1]{\ensuremath{\left(#1\right)}}
\newcommand{\graphfont}[1]{\ensuremath{\mathcal{#1}}}

\newcommand{\inner}[1]{\ensuremath{\left\langle#1\right\rangle}}
\newcommand{\ltwo}{\ensuremath{\ell_{2+}}}
\newcommand{\N}{\ensuremath{\mathbb{N}}}
\newcommand{\norm}[1]{\ensuremath{\left\Vert #1 \right\Vert}}
\newcommand{\opfont}[1]{\ensuremath{\mathbf{#1}}}
\newcommand{\opfontlower}[1]{\ensuremath{\mathfrak{#1}}}
\newcommand{\R}{\ensuremath{\mathbb{R}}}
\newcommand{\Rring}{\ensuremath{\mathbb{R}_\infty}}
\newcommand{\G}{\ensuremath{\graphfont{G}}}
\newcommand{\shiftop}{\ensuremath{\mathfrak{q}}}
\newcommand{\sqfunof}[1]{\ensuremath{\left[#1\right]}}

\usepackage[nameinlink]{cleveref}
\usepackage{graphicx}
\crefformat{equation}{(#2#1#3)}
\crefmultiformat{equation}{(#2#1#3)}
{ and~(#2#1#3)}{, (#2#1#3)}{ and~(#2#1#3)}
\Crefformat{figure}{#2Fig.~#1#3}
\Crefmultiformat{figure}{Figs.~#2#1#3}{ and~#2#1#3}{, #2#1#3}{ and~#2#1#3}
\usepackage[toc,acronym,nonumberlist]{glossaries}
\usepackage{enumerate,color}

\usepackage{tikz}
\usetikzlibrary{calc, intersections, patterns, through, decorations.pathreplacing,decorations.markings,arrows,shapes,positioning}
\usepackage[american]{circuitikz}

\usepackage{tikz-cd}
\usetikzlibrary{cd}

\usepackage{pgfplots}

\pgfplotsset{compat=newest,width=\columnwidth} 
\pgfplotsset{plot coordinates/math parser=false} 
\newlength\fheight 
\newlength\fwidth 

\usepackage{cleveref}
\usepackage{autonum}
\usepackage{glossaries}
\crefname{problem}{problem}{problems}
\crefname{proposition}{proposition}{propositions}
\crefname{procedure}{procedure}{procedures}
\crefname{assumption}{assumption}{assumptions}
\newacronym{lti}{LTI}{linear time invariant}
\newacronym{lqr}{LQR}{Linear Quadratic Regulator}
\tikzset{every picture/.style=thick}

\definecolor{subsectioncolor}{rgb}{0.067,0.627,0.859}
\definecolor{nblue}{rgb}{0,0.263,0.576}
\definecolor{mblue}{rgb}{0.075,0.541,0.855}
\hypersetup{
    colorlinks = true,
    linkcolor = subsectioncolor,
    citecolor = subsectioncolor,
    urlcolor = subsectioncolor,
    }

\title{Cholesky factorisation, and intrinsically sparse\\ linear quadratic regulation}
\author{Julia Adlercreutz and Richard Pates
\thanks{The authors are with the Department of Automatic Control, Lund University, Box 118, {SE-221 00 Lund}, Sweden.} 
\thanks{The authors are members of the ELLIIT Strategic Research Area at Lund University. This work was supported by the ELLIIT Strategic Research Area. }
}
\date{}

\begin{document}

\maketitle

\begin{abstract}
    We classify a family of matrices of shift operators that can be factorised in a computationally tractable manner
    with the Cholesky algorithm.
    Such matrices arise in the linear quadratic regulator problem, and related areas. We use the factorisation to uncover intrinsic sparsity properties in the control laws for transportation problems with an underlying tree structure. This reveals that the optimal control can be applied in a distributed manner that is obscured by standard solution methods.
\end{abstract}

\section{Introduction}\label{sec:intro}

In this paper we present a sparsity preserving factorisation method for
matrices of shift operators.
The types of matrices we consider arise naturally in the context of optimal control,
such as \gls{lqr} problems.
We study how the factorisation can help reveal sparsity patterns
in the optimal control laws that are otherwise obscured.

Distributed (or decentralised) control aims to find sparse maps $K$ from the measurement sequences $x$ to the control sequences $u$, typically on the form
\begin{equation}\label{eq:map}
    u[k] = K x[k].
\end{equation}
When $K$ is a matrix, as in the equation above, there is a clear connection between the sparsity of the map, and the communication requirements for implementing the control law. In particular if the the $ij$-th entry of $K$ is zero, it means that the $j$-th measurement is not needed to compute the $i$-th control input. A sparse matrix $K$ therefore typically implies that there is less need to communicate measurements when implementing the control. This is particularly important when performing control in the large-scale setting, where it is desirable to rely on only local measurements and conduct design in a manner that is easily scaled. For this reason large networks such as district heating \cite{FW13}, power systems \cite{T16}, and internet congestion \cite{K03} are in practice often controlled in a distributed manner.

The problem of finding optimal controllers under sparsity constraints is difficult in general \cite{BT00}. Quadratic invariance \cite{RL06, LL16} can be used to determine if structural constraints on the map $K$ in \cref{eq:map} can be converted into convex design constraints. However it is often the case that sparsity constraints on \cref{eq:map} are not quadratically invariant. Important cases where quadratic invariance can be applied are explored in \cite{SP10} and \cite{AP22}, where process models with a poset structure play a central role. 

There are also fundamental limitations that are intrinsically coupled with the process of enforcing sparsity constraints on control laws written as in \cref{eq:map}. For instance, \cite{B12} shows that local feedback is not sufficient to suppress large-scale disturbances in low dimensional spatial systems. Furthermore, \cite{LD10} shows that restricting communication can have a negative impact on performance.

In contrast to previous approaches, System Level Synthesis \cite{WMD18,WMD21} imposes sparsity on the closed loop trajectories instead of $K$. This allows for control laws with distributed implementations to be designed without necessarily satisfying sparsity constraints on \cref{eq:map}. In this work we adopt a somewhat similar perspective, however rather than enforcing structure through constraints, we search for inherent structure in the unconstrained optimal control laws. That is, controller structure is not imposed \textit{a priori}. Because of this, the control laws we consider have the same performance and robustness guarantees as the centralised control law (robustness properties of decentralised \gls{lqr} is studied in \cite{KL23}).

\begin{figure}[t]
    \centering
    \begin{tikzpicture}[scale=.3,>=stealth]
	
\coordinate (x1) at (-3, 0);
\draw[thick] ($(x1)+(-0.5,-0.25)$) -- +(0,1) -- +(1,1) -- +(1,0);
\draw[thick] ($(x1)+(-0.65,-0.25)$) -- +(0,1.15) -- +(+0.672,1.8);
\draw[thick] ($(x1)+(+0.65,-0.25)$) -- +(0,1.15) -- +(-0.672,1.8);
\draw[thick] ($(x1)+(-0.5,-0.2225)$) -- ($(x1)+(-0.65,-0.2225)$);
\draw[thick] ($(x1)+(+0.5,-0.2225)$) -- ($(x1)+(+0.65,-0.2225)$);
\draw[thick] ($(x1)+(-0.5,-0.25)$) -- +(0,0.8) -- +(1,0.8);
\draw[thick] ($(x1)+(-0.15,0.65)$) -- ($(x1)+(0.15,0.65)$);

\coordinate (x3) at (-9, 0);
\draw[thick] ($(x3)+(-0.5,-0.25)$) -- +(0,1) -- +(1,1) -- +(1,0);
\draw[thick] ($(x3)+(-0.65,-0.25)$) -- +(0,1.15) -- +(+0.672,1.8);
\draw[thick] ($(x3)+(+0.65,-0.25)$) -- +(0,1.15) -- +(-0.672,1.8);
\draw[thick] ($(x3)+(-0.5,-0.2225)$) -- ($(x3)+(-0.65,-0.2225)$);
\draw[thick] ($(x3)+(+0.5,-0.2225)$) -- ($(x3)+(+0.65,-0.2225)$);
\draw[thick] ($(x3)+(-0.5,-0.25)$) -- +(0,0.8) -- +(1,0.8);
\draw[thick] ($(x3)+(-0.15,0.65)$) -- ($(x3)+(0.15,0.65)$);

\coordinate (x5) at (-15, 0);
\draw[thick] ($(x5)+(-0.5,-0.25)$) -- +(0,1) -- +(1,1) -- +(1,0);
\draw[thick] ($(x5)+(-0.65,-0.25)$) -- +(0,1.15) -- +(+0.672,1.8);
\draw[thick] ($(x5)+(+0.65,-0.25)$) -- +(0,1.15) -- +(-0.672,1.8);
\draw[thick] ($(x5)+(-0.5,-0.2225)$) -- ($(x5)+(-0.65,-0.2225)$);
\draw[thick] ($(x5)+(+0.5,-0.2225)$) -- ($(x5)+(+0.65,-0.2225)$);
\draw[thick] ($(x5)+(-0.5,-0.25)$) -- +(0,0.8) -- +(1,0.8);
\draw[thick] ($(x5)+(-0.15,0.65)$) -- ($(x5)+(0.15,0.65)$);

\coordinate (x7) at (-21, 0);
\draw[thick] ($(x7)+(-0.5,-0.25)$) -- +(0,1) -- +(1,1) -- +(1,0);
\draw[thick] ($(x7)+(-0.65,-0.25)$) -- +(0,1.15) -- +(+0.672,1.8);
\draw[thick] ($(x7)+(+0.65,-0.25)$) -- +(0,1.15) -- +(-0.672,1.8);
\draw[thick] ($(x7)+(-0.5,-0.2225)$) -- ($(x7)+(-0.65,-0.2225)$);
\draw[thick] ($(x7)+(+0.5,-0.2225)$) -- ($(x7)+(+0.65,-0.2225)$);
\draw[thick] ($(x7)+(-0.5,-0.25)$) -- +(0,0.8) -- +(1,0.8);
\draw[thick] ($(x7)+(-0.15,0.65)$) -- ($(x7)+(0.15,0.65)$);

\coordinate (x9) at (-27, 0);
\draw[thick] ($(x9)+(-0.5,-0.25)$) -- +(0,1) -- +(1,1) -- +(1,0);
\draw[thick] ($(x9)+(-0.65,-0.25)$) -- +(0,1.15) -- +(+0.672,1.8);
\draw[thick] ($(x9)+(+0.65,-0.25)$) -- +(0,1.15) -- +(-0.672,1.8);
\draw[thick] ($(x9)+(-0.5,-0.2225)$) -- ($(x9)+(-0.65,-0.2225)$);
\draw[thick] ($(x9)+(+0.5,-0.2225)$) -- ($(x9)+(+0.65,-0.2225)$);
\draw[thick] ($(x9)+(-0.5,-0.25)$) -- +(0,0.8) -- +(1,0.8);
\draw[thick] ($(x9)+(-0.15,0.65)$) -- ($(x9)+(0.15,0.65)$);

\coordinate (x2) at (-6,0);
\draw[fill=black, rounded corners=0.2]
(x2) -- ($ (x2)+(0.2,0) $)
-- ($ (x2)+(0.2,0.2) $)
-- ($ (x2)+(0,0.4) $)
-- ($ (x2)+(-0.2,0.4) $)
-- ($ (x2)+(-0.2,0) $) -- cycle;
\draw[fill=white, rounded corners=0.2]
($ (x2)+(0,0.2) $) --
($ (x2)+(0.175,0.2) $) --
($ (x2)+(0,0.375) $) -- cycle;
\draw[fill=black]
($ (x2)+(-0.25,0) $) --
($ (x2)+(-0.25,0.5) $) --
($ (x2)+(-0.95,0.5) $) --
($ (x2)+(-0.95,0) $) -- cycle;
\draw[white,fill=white] ($ (x2)+(0,0) $) circle (0.1cm);
\draw[fill=black] ($ (x2)+(0,0) $) circle (0.07cm);
\draw[white,fill=white] ($ (x2)+(-0.4,0) $) circle (0.1cm);
\draw[fill=black] ($ (x2)+(-0.4,0) $) circle (0.07cm);
\draw[white,fill=white] ($ (x2)+(-0.8,0) $) circle (0.1cm);
\draw[fill=black] ($ (x2)+(-0.8,0) $) circle (0.07cm);

\coordinate (x4) at (-12,0);
\draw[fill=black, rounded corners=0.2]
(x4) -- ($ (x4)+(0.2,0) $)
-- ($ (x4)+(0.2,0.2) $)
-- ($ (x4)+(0,0.4) $)
-- ($ (x4)+(-0.2,0.4) $)
-- ($ (x4)+(-0.2,0) $) -- cycle;
\draw[fill=white, rounded corners=0.2]
($ (x4)+(0,0.2) $) --
($ (x4)+(0.175,0.2) $) --
($ (x4)+(0,0.375) $) -- cycle;
\draw[fill=black]
($ (x4)+(-0.25,0) $) --
($ (x4)+(-0.25,0.5) $) --
($ (x4)+(-0.95,0.5) $) --
($ (x4)+(-0.95,0) $) -- cycle;
\draw[white,fill=white] ($ (x4)+(0,0) $) circle (0.1cm);
\draw[fill=black] ($ (x4)+(0,0) $) circle (0.07cm);
\draw[white,fill=white] ($ (x4)+(-0.4,0) $) circle (0.1cm);
\draw[fill=black] ($ (x4)+(-0.4,0) $) circle (0.07cm);
\draw[white,fill=white] ($ (x4)+(-0.8,0) $) circle (0.1cm);
\draw[fill=black] ($ (x4)+(-0.8,0) $) circle (0.07cm);

\coordinate (x6) at (-18,0);
\draw[fill=black, rounded corners=0.2]
(x6) -- ($ (x6)+(0.2,0) $)
-- ($ (x6)+(0.2,0.2) $)
-- ($ (x6)+(0,0.4) $)
-- ($ (x6)+(-0.2,0.4) $)
-- ($ (x6)+(-0.2,0) $) -- cycle;
\draw[fill=white, rounded corners=0.2]
($ (x6)+(0,0.2) $) --
($ (x6)+(0.175,0.2) $) --
($ (x6)+(0,0.375) $) -- cycle;
\draw[fill=black]
($ (x6)+(-0.25,0) $) --
($ (x6)+(-0.25,0.5) $) --
($ (x6)+(-0.95,0.5) $) --
($ (x6)+(-0.95,0) $) -- cycle;
\draw[white,fill=white] ($ (x6)+(0,0) $) circle (0.1cm);
\draw[fill=black] ($ (x6)+(0,0) $) circle (0.07cm);
\draw[white,fill=white] ($ (x6)+(-0.4,0) $) circle (0.1cm);
\draw[fill=black] ($ (x6)+(-0.4,0) $) circle (0.07cm);
\draw[white,fill=white] ($ (x6)+(-0.8,0) $) circle (0.1cm);
\draw[fill=black] ($ (x6)+(-0.8,0) $) circle (0.07cm);

\coordinate (x8) at (-24,0);
\draw[fill=black, rounded corners=0.2]
(x8) -- ($ (x8)+(0.2,0) $)
-- ($ (x8)+(0.2,0.2) $)
-- ($ (x8)+(0,0.4) $)
-- ($ (x8)+(-0.2,0.4) $)
-- ($ (x8)+(-0.2,0) $) -- cycle;
\draw[fill=white, rounded corners=0.2]
($ (x8)+(0,0.2) $) --
($ (x8)+(0.175,0.2) $) --
($ (x8)+(0,0.375) $) -- cycle;
\draw[fill=black]
($ (x8)+(-0.25,0) $) --
($ (x8)+(-0.25,0.5) $) --
($ (x8)+(-0.95,0.5) $) --
($ (x8)+(-0.95,0) $) -- cycle;
\draw[white,fill=white] ($ (x8)+(0,0) $) circle (0.1cm);
\draw[fill=black] ($ (x8)+(0,0) $) circle (0.07cm);
\draw[white,fill=white] ($ (x8)+(-0.4,0) $) circle (0.1cm);
\draw[fill=black] ($ (x8)+(-0.4,0) $) circle (0.07cm);
\draw[white,fill=white] ($ (x8)+(-0.8,0) $) circle (0.1cm);
\draw[fill=black] ($ (x8)+(-0.8,0) $) circle (0.07cm);

\coordinate (x1_below) at ($(x1)+(0,-3)$);
\coordinate (x2_below) at ($(x2)+(0,-3)$);
\coordinate (x3_below) at ($(x3)+(0,-3)$);
\coordinate (x4_below) at ($(x4)+(0,-3)$);
\coordinate (x5_below) at ($(x5)+(0,-3)$);
\coordinate (x6_below) at ($(x6)+(0,-3)$);
\coordinate (x7_below) at ($(x7)+(0,-3)$);
\coordinate (x8_below) at ($(x8)+(0,-3)$);
\coordinate (x9_below) at ($(x9)+(0,-3)$);

\draw[->, thick] ($(x9_below)+(0.3,0)$) -- node[midway, mblue, above]
{$u_4$} ($(x8_below)+(-0.3,0)$);
\draw[->, thick] ($(x8_below)+(0.3,0)$) -- node[midway, black, above]
{$1$} ($(x7_below)+(-0.3,0)$);
\draw[->, thick] ($(x7_below)+(0.3,0)$) -- node[midway, mblue, above]
{$u_3$} ($(x6_below)+(-0.3,0)$);
\draw[->, thick] ($(x6_below)+(0.3,0)$) -- node[midway, black, above]
{$1$} ($(x5_below)+(-0.3,0)$);
\draw[->, thick] ($(x5_below)+(0.3,0)$) -- node[midway, mblue, above]
{$u_2$} ($(x4_below)+(-0.3,0)$);
\draw[->, thick] ($(x4_below)+(0.3,0)$) -- node[midway, black, above]
{$1$} ($(x3_below)+(-0.3,0)$);
\draw[->, thick] ($(x3_below)+(0.3,0)$) -- node[midway, mblue, above]
{$u_1$} ($(x2_below)+(-0.3,0)$);
\draw[->, thick] ($(x2_below)+(0.3,0)$) -- node[midway, black, above]
{$1$} ($(x1_below)+(-0.3,0)$);

\node[draw,  thick, fill=white, circle, scale=0.8, label =
$x_1$, name=c1] at (x1_below) {};
\node[draw, thick, fill=white, circle, scale=0.8, label =
$x_2$, name=c2] at (x2_below) {};
\node[draw, thick, fill=white, circle, scale=0.8, label =
$x_3$, name=c3] at (x3_below) {};
\node[draw, thick, fill=white, circle, scale=0.8, label =
$x_4$, name=c4] at (x4_below) {};
\node[draw, thick, fill=white, circle, scale=0.8, label =
$x_5$, name=c5] at (x5_below) {};
\node[draw, thick, fill=white, circle, scale=0.8, label =
$x_6$, name=c6] at (x6_below) {};
\node[draw, thick, fill=white, circle, scale=0.8, label =
$x_7$, name=c7] at (x7_below) {};
\node[draw, thick, fill=white, circle, scale=0.8, label =
$x_8$, name=c8] at (x8_below) {};
\node[draw, thick, fill=white, circle, scale=0.8, label =
$x_9$, name=c9] at (x9_below) {};

\path[->, thick] (c1) edge [out=-45, in=-135, loop below] node[midway,
below] {$1$} (c1);
\path[->, thick] (c3) edge [out=-45, in=-135, loop below] node[midway,
below] {$1-\textcolor{mblue}{u_1}$} (c3);
\path[->, thick] (c5) edge [out=-45, in=-135, loop below] node[midway,
below] {$1-\textcolor{mblue}{u_2}$} (c5);
\path[->, thick] (c7) edge [out=-45, in=-135, loop below] node[midway,
below] {$1-\textcolor{mblue}{u_3}$} (c7);
\path[->, thick] (c9) edge [out=-45, in=-135, loop below] node[midway,
below] {$1-\textcolor{mblue}{u_4}$} (c9);

\end{tikzpicture}
    \caption{Transportation network with a line graph topology.}
    \label{fig:intro-example}
\end{figure}
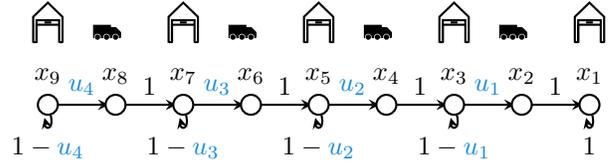

The main contribution of this paper 
is a factorisation for sparse matrices of shift operators. This is then applied
to an \gls{lqr} problem for transportation networks, revealing significant sparsity structure in the underlying control laws.
To illustrate our results, consider the following
\gls{lqr} problem associated with the transportation network in \Cref{fig:intro-example}:
\begin{equation}
    \begin{aligned}
\min_{u\sqfunof{0},u\sqfunof{1},\ldots{}}\sum_{k=0}^\infty{}&
        \frac{1}{2^{k}} \funof{x_1\sqfunof{k}^2   + x_3\sqfunof{k}^2+ \ldots + x_{9}\sqfunof{k}^2}  \\
\text{s.t.}\quad{}
        x_{2i-1}\sqfunof{k+1}   &= x_{2i-1}\sqfunof{k} + x_{2i}\sqfunof{k},   \quad i = 1,\ldots,5,  \\
        x_{2i}\sqfunof{k+1}     &= u_{i}\sqfunof{k}, \quad \quad \quad \quad \quad \quad i = 1,\ldots,4\\
        x\sqfunof{0} &= x_0.
\end{aligned}
\end{equation}
As explained further in \cref{sec:simple-lqr}, the basic objective is to optimally redistributed a commodity through local transportations between neighbouring storage facilities. Solving the optimal control problem above gives a feedback law on the form of \cref{eq:map}, that if written as in \cref{eq:map} would appear to require centralised communication to implement.
However we show that the same control law can factorised in a sparse way as
\begin{equation}\label{eq:fact-lqr-intro}
    \scalebox{0.85}{$
    \setlength\arraycolsep{2pt}
    \underbrace{\begin{bmatrix} \frac{3}{2} & 0 & 0 & 0 \\[3pt]
\tfrac{-1}{2} & \tfrac{7}{6} & 0 & 0 \\[3pt]
0 & \tfrac{-1}{2} & \tfrac{15}{14} & 0 \\[3pt]
0 & 0 & \tfrac{-1}{2} & \tfrac{31}{30}\end{bmatrix}}_{K_1}
$} u[k] = \scalebox{0.85}{$
    \setlength\arraycolsep{2pt}
    \underbrace{ \tfrac{1}{\sqrt{2}}\begin{bmatrix} \tfrac{1}{2} & \tfrac{1}{2} & -1 & -1 & 0 & 0 & 0 & 0 & 0 \\
0 & 0 & \tfrac{1}{2} & \tfrac{1}{2} & -1 & -1  & 0 & 0 & 0 \\
0 & 0 & 0 & 0 & \tfrac{1}{2} & \tfrac{1}{2} & -1 & -1 & 0 \\
0 & 0 & 0 & 0 & 0 & 0 & \tfrac{1}{2} & \tfrac{1}{2} & -1 \end{bmatrix}}_{K_2}
$} x[k].
\end{equation}
That is $K = K_1^{-1} K_2$ is the classical state-feedback solution. This reveals that the $i$-th control input only depends on four state
measurements, and the control input of a neighbouring transportation link. This makes the control law highly amenable to implementations using only local communication.

The method to arrive at the sparse \gls{lqr} solution presented above involves a Cholesky factorisation of
matrices of shift operators. Existence results for the factorisation are presented in \cref{sec:chol}. The associated computational framework and algorithms are given in \cref{sec:comp}, and a software implementation is provided in \cite{A25}. The sparsity properties of the factorisation, and their connections to techniques from spectral factorisation, are discussed in \cref{sec:sparse,sec:spec}. Finally we illustrate how the factorisation can be used to discover sparse factorisations of \gls{lqr} control laws in \cref{sec:control}.

\section*{Notation}\label{sec:notation}

\subsection{Basic notation} $\R$ denotes the field of real numbers, $\R^n$ the vectors with $n$ elements in $\R$, and $\R^{n\times{}m}$ $n\times{}m$ matrices with elements in $\R$. Throughout $\ltwo^n$ denotes the square-summable sequences $y=\funof{y\sqfunof{0} ,
y\sqfunof{1}, \ldots}$, where $y\sqfunof{k} \in \R^n$, with inner product $\inner{x,y} = 
\sum_{k=0}^{\infty} x\sqfunof{k}^\top y\sqfunof{k}  $. Operators on $\ltwo$, and matrices of operators on $\ltwo^n$, are denoted with boldface. The (matrix) identity operator is $\opfont{I}$ (i.e. $\opfont{I}x=x$ for all $x\in\ltwo^n$), and the matrix of zero operators is $\opfont{0}$ (the dimensions will always be clear from context). We write the adjoint of an operator $\opfont{M}$ as $\opfont{M}^*$. $\opfont{M}$ is positive semi-definite (written $\opfont{M}\succeq{}\opfont{0}$) if $\opfont{M}=\opfont{M}^*$, and $\inner{x,\opfont{M}x}\geq{}0$ for all $x\in\ltwo^n$. Where convenient boldface integers denote scaling operators (for example $\opfont{-2}$ is the operator that multiplies every element of a sequence by $-2$). A matrix of operators $\opfont{P}$ is a permutation matrix if it contains exactly one non-zero entry in each row and column, and that entry is equal to $\opfont{1}$. 

\subsection{Shift operators} The forward shift operator is
\begin{equation}\label{eq:forwardshift}
\shiftop:\funof{y[0],y[1],y[2],\ldots{}}\mapsto\funof{y[1],y[2],y[3],\ldots{}}.
\end{equation}
The adjoint of \shiftop{} is the backward shift given by
\[
\shiftop{}^*:\funof{y[0],y[1],y[2],\ldots{}}\mapsto\funof{0,y[0],y[1],\ldots{}}.
\]
Furthermore we define the sets of operators
\begin{equation}\label{eq:multset}
\R[\shiftop,\shiftop^*]=\cfunof{\sum_{i=0}^{n-1}\sum_{j=0}^{n-1}\alpha_{ij}\funof{\shiftop^*}^i\shiftop^j:\alpha_{ij}\in\R},
\end{equation}
and
\begin{equation}\label{eq:rring}
    \R_{\infty} = \cfunof{\sum_{k=0}^n \alpha_k\funof{\shiftop^{*k}\shiftop^k}:n\in\N,\alpha_k\in\R}.
\end{equation}
The algebraic and computational properties of these sets will be motivated and discussed in \cref{sec:comp}.

\section{Results}\label{sec:results}

In this section we study the factorisation of matrices of operators $\opfont{M}^*\opfont{M}$, where the entries of $\opfont{M}$ are in $\R[\shiftop,\shiftop^*]$. As we will see in \cref{sec:control}, such matrices arise in optimal control problems, such as the \gls{lqr} problem. We place particular focus on Cholesky factorisations on the form
\[ \opfont{L}\opfont{L}^*=\funof{\opfont{MP}}^*\opfont{MP},
\]
where $\opfont{L}$ is lower triangular with diagonal entries in $\Rring$, and $\opfont{P}$ is a permutation matrix. These conditions will turn out to be precisely those required to solve least squares problems via back substitution within a polynomially based computational framework. We give necessary and sufficient conditions for the existence of such factorisations in \cref{sec:chol}. We then develop the computational framework and tools that allow the factorisation and back substitution to be performed in \cref{sec:comp}. The sparsity preserving properties of the factorisation are then explored in \cref{sec:sparse}, before we compare our approach to more classically motivated factorisation methods based on spectral factorisation in \cref{sec:spec}.

\subsection{A factorisation for matrices of shift operators}\label{sec:chol}

In this section we characterise factorisability conditions for matrices of operators. Sparsity properties turn out to be of central importance, so we introduce the following set of matrices with sparsity pattern structured by a graph.

\begin{definition}\label{def:mon}
    Let $\mathcal{G}$ be a graph with vertices $\cfunof{v_1,\ldots{},v_n}$, and $m$ underdirected edges $e_i=\cfunof{v_j,v_k}$. Define $\mathbb{M}_{\mathcal{G}}$ to be the set of $n\times{}m$ matrices of operators $\opfont{M}$ such that the $ij$-th entry of $\opfont{M}$ satisfies
    \begin{equation}
        \opfont{M}_{ij} \in \begin{cases}         \cfunof{\alpha_{ij}\shiftop^k,\funof{\shiftop^*}^k\alpha_{ij}}  & \text{if $v_i \in e_j$}, \\
            \cfunof{\opfont{0}} &  \text{otherwise},\\
        \end{cases}
    \end{equation}
    for some $\alpha_{ij}\in\Rring$.
\end{definition}
The following theorem shows that there exists a Cholesky factorisation of $\opfont{M}^*\opfont{M}$ for every $\opfont{M}\in\mathbb{M}_{\mathcal{G}}$, where in addition:
\begin{enumerate}
    \item The diagonal entries of the factor lie in $\Rring$;
    \item The remaining entries of the factor lie in $\R[\shiftop,\shiftop^*]$;
\end{enumerate}
if and only if $\mathcal{G}$ is a tree graph. The significance of this result stems from its computational properties. The existence of a Cholesky factorisation can be established under more general conditions, however ensuring that 1--2) hold will allow the factorisation to be efficiently computed. This will be discussed in \cref{sec:comp}.

\begin{theorem}\label{thm:1}
    Given any undirected graph \graphfont{G}, the following two statements are equivalent:
\begin{enumerate}
    \item[(i)] \graphfont{G} is a tree graph.
    \item[(ii)] For every $\opfont{M}\in\mathbb{M}_{\graphfont{G}}$, there exists a 
        permutation matrix $\opfont{P}$, and a lower triangular matrix $\opfont{L}$ with entries in
        $\R\sqfunof{\shiftop,\shiftop^*}$ and diagonal entries in \Rring{}, such that
    \[
\opfont{L}\opfont{L}^*=\funof{\opfont{MP}}^*\opfont{MP}.
    \]
\end{enumerate}
\end{theorem}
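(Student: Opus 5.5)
The plan is to read the factorisation $\opfont{L}\opfont{L}^*=(\opfont{MP})^*\opfont{MP}$ as a symmetric Gaussian elimination on the $m\times m$ Gram matrix $\opfont{M}^*\opfont{M}$. This matrix is indexed by the edges of $\graphfont{G}$. Its $(j,k)$ entry is nonzero only if $e_j$ and $e_k$ share a vertex, so its sparsity pattern is the line graph of $\graphfont{G}$. The permutation $\opfont{P}$ is then an elimination ordering of the edges. Two algebraic facts will be used throughout.

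\begin{itemize}
\item[(a)] $\Rring$ is a commutative algebra of self-adjoint, time-varying diagonal multiplication operators that become constant after finitely many steps. Hence a positive element of $\Rring$ has a square root and an inverse in $\Rring$.
\item[(b)] Since $\shiftop\shiftop^*=\opfont{I}$, every admissible entry of Definition~\ref{def:mon} satisfies $\opfont{M}_{ij}^*\opfont{M}_{ij}\in\Rring$: $(\alpha\shiftop^k)^*\alpha\shiftop^k=\shiftop^{*k}\alpha^2\shiftop^k$ and $(\shiftop^{*k}\alpha)^*\shiftop^{*k}\alpha=\alpha^2$.
\end{itemize}

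In particular every diagonal entry of $\opfont{M}^*\opfont{M}$, i.e.\ every pivot candidate, lies in $\Rring$.

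\textbf{(i)$\Rightarrow$(ii).} I would argue by induction on the number of edges, using a leaf-first ordering. Since $\graphfont{G}$ is a tree, it has a leaf $v$ with unique edge $e=\{v,u\}$; I order $e$ first. The column of $\opfont{M}$ for $e$ has only two nonzero entries, $a$ in row $v$ and $b$ in row $u$. Moreover, row $v$ has no other nonzero entry. The first column of $\opfont{L}$ is therefore $\opfont{d}^{-1/2}$ times the first column of $\opfont{M}^*\opfont{M}$, where $\opfont{d}=a^*a+b^*b\in\Rring$. Its off-diagonal entries are $b^*\opfont{M}_{uk}$ for edges $e_k\ni u$; these are polynomial in $\shiftop,\shiftop^*$, and the $\Rring$ factor $\opfont{d}^{-1/2}$ keeps them in $\R[\shiftop,\shiftop^*]$ because it multiplies through. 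Degenerate pivots, where $\alpha$ vanishes on an initial segment, need a separate limiting or convention argument.

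The key step is to show that the Schur complement is again of the form $\tilde{\opfont{M}}^*\tilde{\opfont{M}}$ with $\tilde{\opfont{M}}\in\mathbb{M}_{\graphfont{G}\setminus v}$. Removing $e$ only changes the contributions routed through $u$. Concretely, I would apply an operator "Givens rotation" in rows $v,u$ that isometrically combines $a$ and $b$ into a single entry. After this rotation, row $u$ of the remaining columns is rescaled by an element of $\Rring$ such as $(a^*a)^{1/2}\opfont{d}^{-1/2}$, applied on the correct side of the shifts. I expect this to be the main obstacle: checking that the rescaled entries remain of the monomial form $\alpha\shiftop^k$ or $\shiftop^{*k}\alpha$, with $\alpha\in\Rring$. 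This uses the commutation rule $\shiftop^{*k}\beta\shiftop^k\in\Rring$ for $\beta\in\Rring$, and that $\Rring$ is closed under $\beta\mapsto\shiftop\beta\shiftop^*$. Once this closure is verified, the induction hypothesis applied to the smaller tree completes $\opfont{L}$.

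\textbf{(ii)$\Rightarrow$(i).} I would argue by contrapositive. If $\graphfont{G}$ contains a cycle, I choose $\opfont{M}$ supported on that cycle with all coefficients equal to $\opfont{1}$ except one entry $\shiftop$ (and zero elsewhere). On a cycle, whatever ordering $\opfont{P}$ is chosen, the last pivot on the cycle has accumulated fill-in around the whole loop. I would compute it explicitly for the cycle and show it equals $\opfont{1}$ plus a term containing $(\opfont{c}-\shiftop^*\shiftop\cdots)^{-1}$-type factors whose inverse is an infinite series in $\shiftop$. A degree argument then shows that no $\opfont{L}$ with polynomial entries and an $\Rring$ diagonal can satisfy the identity. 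The argument uses uniqueness of Cholesky factors up to diagonal unitaries, which in $\Rring$ are $\pm\opfont{1}$-valued. If $\graphfont{G}$ is disconnected but acyclic, the forward argument still applies componentwise. So "tree" must be read as "forest", or connectedness is handled by noting that the paper's graphs are connected.
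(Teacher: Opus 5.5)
Your forward direction follows the paper's argument. You eliminate a leaf edge first and show the Schur complement is $\opfont{M}_{\mathrm{red}}^*\opfont{M}_{\mathrm{red}}$, where the row of the non-leaf endpoint is rescaled by the $\Rring$ element $\sqrt{\opfont{1}-b\opfont{d}^{+}b^*}$ (your ``$(a^*a)^{1/2}\opfont{d}^{-1/2}$ on the correct side of the shifts''). Your closure facts $\shiftop^{*k}\beta\shiftop^k,\ \shiftop^{k}\beta\shiftop^{*k}\in\Rring$ are exactly what justifies $\opfont{M}_{\mathrm{red}}\in\mathbb{M}_{\G_1}$, which the paper leaves implicit.

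\textbf{Gap in (i)$\Rightarrow$(ii).} The degenerate case you postpone is not marginal, because (ii) quantifies over every $\opfont{M}\in\mathbb{M}_{\G}$. On a tree with at least two edges, if every nonzero entry of $\opfont{M}$ is $\shiftop$, then every leaf pivot is a positive multiple of $\shiftop^*\shiftop$, which has no inverse, so $\opfont{d}^{-1/2}$ never exists. Even invertible pivots can produce singular reduced rows: for $a=\shiftop$, $b=\opfont{1}$ the rescaling factor is $\tfrac{1}{\sqrt{2}}\shiftop^*\shiftop$. So your induction hypothesis must cover singular data regardless. The paper's fix is exact and needs no limit:
\begin{itemize}
\item replace inverses by Moore--Penrose pseudoinverses, which stay in $\Rring$ (Lemma~\ref{lem:pseudo});
\item prove the range condition $\opfont{d}\opfont{d}^+\opfont{N}_{12}=\opfont{N}_{12}$ from $\opfont{N}\succeq\opfont{0}$ (or directly from $b^*b\preceq\opfont{d}$);
\item set $\opfont{L}_{21}=\opfont{N}_{21}\sqrt{\opfont{d}}^{+}$;
\item check that $\opfont{1}-b\opfont{d}^+b^*\succeq\opfont{0}$, so its square root lies in $\Rring$.
\end{itemize}

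\textbf{Gap in (ii)$\Rightarrow$(i).} The converse does not yet contain an argument, and the mechanism you predict is not the one at work. Let $\opfont{N}_{11}$ be the Gram block of the $n-1$ cycle edges other than the last. Removing one edge leaves a path, so by the forward direction $\opfont{N}_{11}$ factors with an $\Rring$ diagonal. When invertible, $\opfont{N}_{11}^{-1}$ therefore has entries in $\R[\shiftop,\shiftop^*]$. So no infinite series appears, the last pivot is a finite polynomial, and a degree argument has nothing to act on. The actual contradiction is simpler: $\opfont{L}_{nn}\in\Rring$ forces $\opfont{L}_{nn}\opfont{L}_{nn}^*\in\Rring$, while the last pivot contains terms $(\shiftop^*)^i\shiftop^j$ with $i\neq j$. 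No uniqueness of Cholesky factors is needed.

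What is missing is control over every $\opfont{P}$. If cycle edge $j$ is placed last, $\opfont{L}_{nn}\opfont{L}_{nn}^*$ equals the Schur complement of $\opfont{N}_{jj}$ with respect to the other cycle edges, independent of their order. This uses that the relevant Gram block is invertible and that the self-adjoint $\Rring$ diagonal then makes the corresponding block of $\opfont{L}$ invertible. The paper picks $\opfont{M}_{\text{c}}$, with $-\opfont{1}$ and $\shiftop^*$ arranged cyclically, so that $\opfont{N}=\opfont{S}^*\opfont{N}\opfont{S}$. This collapses all orderings to a single Schur complement, which contains $\shiftop^n$ and $(\shiftop^*)^n$.

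Your $\opfont{M}$ (all $\opfont{1}$ except one $\shiftop$) breaks that symmetry, so you would need a separate computation for each possible last edge and every cycle length. It does appear to work: on a triangle, whichever edge is last, the pivot is $\tfrac13(\opfont{1}+\shiftop)^*(\opfont{1}+\shiftop)\notin\Rring$. But that general computation is the whole content of this direction, and you have not supplied it.

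Your remark that forests also satisfy (ii) is correct. The paper's converse only treats graphs containing a cycle, so it effectively proves that (ii) is equivalent to $\G$ being acyclic.
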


\begin{remark}\label{rem:extend}
    \Cref{thm:1} can also be applied in settings where the entries of $\opfont{M}$ are less constrained. More specifically, provided $\opfont{UMT^{-1}}\in\mathbb{M}_{\mathcal{G}}$ where $\opfont{U}$ is inner ($\opfont{U}^*\opfont{U}=\opfont{I}$), a Cholesky like factorisation
    \[\opfont{L}\opfont{L}^*=\funof{\opfont{MTP}}^*\opfont{MTP}
    \]
    can still be obtained. This will be leveraged in \cref{sec:simple-lqr} when the \gls{lqr} problem is studied.
\end{remark}

\begin{proof}$\text{(i)}\implies{}\text{(ii)}$: The proof will require the
    following facts about operators $\opfontlower{y}\in\Rring$. F3 follows immediately from the definition of $\Rring$, and F1-F2 are proved in \Cref{lem:pseudo} which can be found in \cref{sec:comp}.
\begin{enumerate}
    \item[F1] If $\opfontlower{y}\succeq{}0$, it has a unique positive
        semi-definite square root $\sqrt{\opfontlower{y}}\in\Rring$.
    \item[F2] The Moore-Penrose inverse $\opfontlower{y}^+$ is an operator in $\Rring$.
    \item[F3] $\opfontlower{y}$ is self-adjoint ($\opfontlower{y}=\opfontlower{y}^*$).
\end{enumerate}
Let $m$ denote the number of edges in $\graphfont{G}$. If $m=1$,
$\opfont{M}^*\opfont{M}\in\Rring$, and so $\opfont{M}^*\opfont{M}=
\opfont{PL}\funof{\opfont{PL}}^*$ with
\[
\opfont{P}=\opfont{1}\;\text{and}\;\opfont{L}=\sqrt{\opfont{M}^*\opfont{M}},
\]
which meets all the requirements of (ii) by F1. We now proceed by induction
over $m$, and consider the case that $\graphfont{G}$ has $m+1$ edges. Since
$\graphfont{G}$ is a tree, it must contain an edge that is incident to a
vertex with degree 1, a \textit{leaf edge}. Consequently there exist permutation matrices
$\opfont{P}_{\text{v}}$ and $\opfont{P}_{\text{e}}$ such that for any
$\opfont{M}\in\mathbb{M}_{\graphfont{G}}$,
\[
\opfont{P}_{\text{v}}\opfont{M}\opfont{P}_{\text{e}}=
\begin{bmatrix}
    \overline{\opfont{M}}_{11}&\opfont{0}\\
    \overline{\opfont{M}}_{21}&\overline{\opfont{M}}_{22}\\
    \opfont{0}&\overline{\opfont{M}}_{32}
\end{bmatrix},
\]
where both the blocks $\overline{\opfont{M}}_{11}$ and 
$\overline{\opfont{M}}_{21}$ have dimension 1 by 1. Furthermore
\[
\begin{bmatrix}
    \overline{\opfont{M}}_{22}\\\overline{\opfont{M}}_{32}
\end{bmatrix}\in\mathbb{M}_{\graphfont{G}_1}
\]
where $\graphfont{G}_1$ is a tree graph with $m$ edges. Now define
$\opfont{N} = \opfont{P}_{\mathrm{e}}^* \opfont{M}^* 
\opfont{M}\opfont{P}_{\mathrm{e}}$, and partition $\opfont{N}$ into a 
2 by 2 block matrix with sub-blocks $\opfont{N}_{11}$, $\opfont{N}_{12}$,
$\opfont{N}_{21}$ and $\opfont{N}_{22}$, such that
    \begin{align}
        \!\!\opfont{N}
        \label{eq:blockform}&=
        \begin{bmatrix} \overline{\opfont{M}}_{11}^* \overline{\opfont{M}}_{11} + \overline{\opfont{M}}_{21}^* \overline{\opfont{M}}_{21} & \overline{\opfont{M}}_{21}^*\overline{\opfont{M}}_{22} \\
        \overline{\opfont{M}}_{22}^*\overline{\opfont{M}}_{21} &
        \overline{\opfont{M}}_{22}^* \overline{\opfont{M}}_{22} + \overline{\opfont{M}}_{32}^* \overline{\opfont{M}}_{32}\end{bmatrix}.
    \end{align}
Note in particular that
\begin{enumerate}
    \item $\opfont{N}_{11}\in\Rring$;
    \item $\opfont{N}_{11}\opfont{N}_{11}^+=\opfont{N}_{11}^+\opfont{N}_{11}$;
    \item $\opfont{N}_{11}\opfont{N}_{11}^+\opfont{N}_{12}=\opfont{N}_{12}$.
\end{enumerate}
1)--2) follow immediately from F2--3, and 3) follows since $\opfont{N}\succeq{}\opfont{0}$
as we will now show. With a view to establishing a contradiction, suppose
that 3) does not hold and so $\funof{\opfont{1} - \opfont{N}_{11}
\opfont{N}_{11}^+} \opfont{N}_{12} \neq  0$.
Therefore there exist \ltwo  sequences $x_1$ and $x_2$
such that
\begin{equation}
    \begin{aligned}\label{eq:orthogonal-argument}
    &\inner{x_1, \funof{\opfont{1} - \opfont{N}_{11}\opfont{N}_{11}^+} \opfont{N}_{12} x_2} = \\
        & \qquad=\inner{\funof{\opfont{1} - \opfont{N}_{11} \opfont{N}_{11}^+} x_1, \opfont{N}_{12} x_2 } \neq 0,
    \end{aligned}
\end{equation}
where we have used the fact that $\opfont{1} - \opfont{N}_{11} \opfont{N}_{11}^+$
is self adjoint.
Let $x_3 = \funof{ \opfont{1} - \opfont{N}_{11} \opfont{N}_{11}^+ }
x_1$, and note from equation
\eqref{eq:orthogonal-argument} that $x_3 \neq 0$.
Putting
\[
y=
\begin{bmatrix} \opfont{1} & \opfont{0} \\ -\opfont{N}_{21}\opfont{N}_{11}^+
    & \opfont{I} \end{bmatrix}^*
\begin{bmatrix}
    x_3\\\gamma{}x_2
\end{bmatrix},
\]
consider
\begin{equation}
    \begin{aligned}\label{eq:semidefinite}
        &\inner{y,\opfont{N}y}= \inner{x_3, \opfont{N}_{11} x_3} + \gamma^2 \inner{ x_2, \opfont{N}_{\mathrm{red}}x_2}+\\
        &\quad\qquad\qquad+2\gamma  \inner{x_3, \funof{ \opfont{1}- \opfont{N}_{11}\opfont{N}_{11}^+} \opfont{N}_{12} x_2},
    \end{aligned}
\end{equation}
where
\[
\opfont{N}_{\mathrm{red}} = \opfont{N}_{22} - \opfont{N}_{21} \opfont{N}_{11}^+ \opfont{N}_{12}.
\]
By F3 and properties of the Moore-Penrose inverse,
$\opfont{N}_{11}\funof{ \opfont{1} -\opfont{N}_{11} \opfont{N}_{11}^+}
=\opfont{0}$. It then follows from \cref{eq:semidefinite} that
\[
\inner{y,\opfont{N}y}=\alpha\gamma+\beta\gamma^2
\]
for some $\alpha,\beta\in\R$ with $\alpha\neq{}0$. This implies that there
are values of $\gamma$ such that
$\inner{y,\opfont{N}y}\leq{}0$. However this is a
contradiction since
\[
\opfont{N}\succeq{}0\implies\inner{y,\opfont{N}y}\geq{}0,
\]
and so 3) holds.

Returning  now to \cref{eq:blockform}, direct computation using 1)--3) shows that
\begin{equation}\label{eq:iterative-step}
    \opfont{N} = \begin{bmatrix} \opfont{1} & \opfont{0} \\ \opfont{N}_{21}\opfont{N}_{11}^+ & \opfont{I} \end{bmatrix} 
        \begin{bmatrix} \opfont{N}_{11} & \opfont{0} \\ \opfont{0} & \opfont{N}_{\mathrm{red}} \end{bmatrix} 
        \begin{bmatrix} \opfont{1} & \opfont{N}_{11}^+ \opfont{N}_{12} \\ \opfont{0} & \opfont{I} \end{bmatrix},
\end{equation}
and also that
\[
\opfont{N}_{\mathrm{red}} =\begin{bmatrix}
    \overline{\opfont{M}}_{22}\\\overline{\opfont{M}}_{32}
    \end{bmatrix}^*
    \funof{\opfont{I}-\begin{bmatrix}
        \overline{\opfont{M}}_{21}\\\mathbf{0}
    \end{bmatrix}
    \opfont{N}_{11}^+\begin{bmatrix}
        \overline{\opfont{M}}_{21}\\\mathbf{0}
    \end{bmatrix}^*}
    \begin{bmatrix}
    \overline{\opfont{M}}_{22}\\\overline{\opfont{M}}_{32}
    \end{bmatrix}.
\]
A routine calculation then shows that
$\overline{\opfont{M}}_{21}\opfont{N}^+_{11}\overline{\opfont{M}}_{21}^*\in\Rring$ and also that
\[
\opfont{1}-\overline{\opfont{M}}_{21}\opfont{N}^+_{11}\overline{\opfont{M}}_{21}^*\succeq{}\opfont{0}.
\]
Therefore
\[
\opfont{N}_{\mathrm{red}}=\opfont{M}_{\mathrm{red}}^*\opfont{M}_{\mathrm{red}},
\]
where
\begin{equation}\label{eq:redstep}
\opfont{M}_{\mathrm{red}}=
\begin{bmatrix}
    \sqrt{\opfont{1}-\overline{\opfont{M}}_{21}\opfont{N}^+_{11}\overline{\opfont{M}}_{21}^*}\overline{\opfont{M}}_{22}\\
    \overline{\opfont{M}}_{32}  
\end{bmatrix}\in\mathbb{M}_{\graphfont{G}_1}.
\end{equation}
By the induction hypothesis there exists a permutation matrix $\opfont{P}_{\mathrm{red}}$
and a lower triangular $\opfont{L}_{\mathrm{red}}$ with diagonal entries in $\Rring$ such that
\[
\opfont{N}_{\mathrm{red}}=\opfont{P}_{\mathrm{red}}\opfont{L}_{\mathrm{red}}\funof{\opfont{P}_{\mathrm{red}}\opfont{L}_{\mathrm{red}}}^*.
\]
In light of \cref{eq:iterative-step}, it then follows that that
$\opfont{M}^*\opfont{M}=\opfont{PL}\funof{\opfont{PL}}^*$, with
\begin{equation}\label{eq:factred}
\setlength\arraycolsep{2pt}
\opfont{P} = \opfont{P_e}
\begin{bmatrix}
\opfont{1}&\opfont{0}\\
\opfont{0}&\opfont{P}_{\mathrm{red}}
\end{bmatrix}\;\text{and}\;
\opfont{L}=\scalebox{0.99}{$
\begin{bmatrix}
\sqrt{\opfont{N}_{11}}&\opfont{0}\\
\opfont{P}_{\mathrm{red}}^*\opfont{N}_{21}\sqrt{\opfont{N_{11}}}^+&\opfont{L}_{\mathrm{red}}
\end{bmatrix}$}.
\end{equation}
Hence a suitable factorisation exists for a graph with $m+1$ edges, and the
result follows by induction.

$\neg\text{(i)}\implies\neg\text{(ii)}$:
    Suppose that $\G$ contains a cycle and let 
\begin{equation} \label{eq:cycleM}
        \opfont{M}_{\text{c}} = \begin{bmatrix} -\opfont{1} & \opfont{0} & \ldots & \opfont{0} & \shiftop^* \\
        \shiftop^* & -\opfont{1} & \opfont{0} & \ldots& \opfont{0} \\
        & \ddots &\ddots & \ddots & \\
        \opfont{0} & \opfont{0} & \shiftop^*  & -\opfont{1} & \opfont{0}\\  
        \opfont{0} & \opfont{0} & \opfont{0} & \shiftop^* & -\opfont{1}\end{bmatrix},
    \end{equation}
    where $\opfont{M}_{\text{c}}$ has the same number of rows as there are
    vertices in the cycle. Then there exist permutation matrices
    $\opfont{P}_{\text{v}}$ and $\opfont{P}_{\text{e}}$ such that
    \begin{equation}
        \opfont{M}= 
        \opfont{P}_{\text{v}}^*\begin{bmatrix} \opfont{0} & \opfont{0} \\ \opfont{0} & \opfont{M}_{\text{c}}  \end{bmatrix} 
        \opfont{P}_{\text{e}}^*\in \mathbb{M}_{\G}.
    \end{equation}
    In the remainder of the proof we will show that no factorisation on the
    form in $\text{(ii)}$ exists for this matrix. First observe that $\opfont{M}^*\opfont{M}$ can have such a factorisation if and only if $\opfont{M}_{\text{c}}^* \opfont{M}_{\text{c}} $ does.
    We will now prove that $\opfont{N} = \opfont{M}_{\text{c}}^* \opfont{M}_{\text{c}}$
    has a suitable factorisation only if
    $\opfont{N}_{22} - \opfont{N}_{21} \opfont{N}_{11}^{-1} \opfont{N}_{12} \in \Rring$,
    where $\opfont{N}_{11}$, $\opfont{N}_{12}$, $\opfont{N}_{21}$ and $\opfont{N}_{22}$ are the sub-blocks of a 2 by 2 block matrix partitioning of
    \begin{equation}
        \opfont{M}_{\text{c}}^* \opfont{M}_{\text{c}} =\begin{bmatrix} \opfont{2} & 
            -\shiftop & \opfont{0} &\ldots & \opfont{0} & -\shiftop^*\\
        -\shiftop^* & \opfont{2} & -\shiftop & \opfont{0} & \ldots & \opfont{0} \\
        & \ddots & \ddots & \ddots & & -\shiftop \\
        -\shiftop & \opfont{0} & \ldots & \opfont{0} & -\shiftop^*& \opfont{2}\end{bmatrix},
    \end{equation}
    such that the sub-block $\opfont{N}_{22}$ has dimensions 1 by 1. Let
    \begin{equation}
        \opfont{S} = \begin{bmatrix} \opfont{0} & \opfont{1} \\
        \opfont{I} & \opfont{0}\end{bmatrix}.
    \end{equation}
    When applied from the right, $\opfont{S}$ cycles all the columns one
    step to the left (with the first column becoming the last), and when
    applied from the left cycles all the rows up one (with the first row
    becoming the last). We see that,
    \begin{equation}
        \opfont{N}= \opfont{S}^* \opfont{N}\opfont{S} .
    \end{equation}
    A permutation matrix $\opfont{P}$ has exactly one non-zero matrix element in each row and column. This means that for any permutation matrix, there exists a $k \in \N$ such that
    \begin{equation}
        \opfont{S}^{k} \opfont{P} = \begin{bmatrix} \opfont{P}_{11} & \opfont{0} \\ \opfont{0} &\opfont{1} \end{bmatrix} ,
    \end{equation}
    where $\opfont{P}_{11}$ is a permutation matrix.

    The factorisation in (ii) supposes the existence of a permutation matrix 
    $\opfont{P}$ and lower triangular matrix $\opfont{L}$ with 
    diagonal entries in $\Rring$ such that 
    $\opfont{P}^* \opfont{N}\opfont{P}=\opfont{L}\opfont{L}^*$. However from the above argument we see that there exists a $k \in \N$
    such that
    \begin{equation}\label{eq:Npermuted}
        \setlength\arraycolsep{2pt}
        \opfont{P}^* \opfont{N}\opfont{P}=\opfont{P}^* (\opfont{S}^{k})^* \opfont{N} \opfont{S}^{k} \opfont{P} = \begin{bmatrix} \opfont{P}_{11}^* \opfont{N}_{11} \opfont{P}_{11}
            & \opfont{P}_{11}^* \opfont{N}_{12} \\
            \opfont{N}_{21} \opfont{P}_{11} & \opfont{N}_{22}\end{bmatrix} .
    \end{equation}    
    Any valid factorisation fulfils
    \begin{equation}\label{eq:Nfactorised}
        \begin{split}
            \opfont{P}^* \opfont{N}\opfont{P} &= \begin{bmatrix} \opfont{L}_{11} & \opfont{0} \\ \opfont{L}_{21} & \opfont{L}_{22} \end{bmatrix} 
                \begin{bmatrix} \opfont{L}_{11}^* & \opfont{L}_{21}^* \\ \opfont{0} & \opfont{L}_{22}^* \end{bmatrix} = \\
                    & =\begin{bmatrix} \opfont{L}_{11} \opfont{L}_{11}^* & \opfont{L}_{11}  \opfont{L}_{21}^* \\
                    \opfont{L}_{21}  \opfont{L}_{11}^* & \opfont{L}_{21} \opfont{L}_{21}^* + \opfont{L}_{22} \opfont{L}_{22}^*\end{bmatrix}.
        \end{split}
    \end{equation}
    By comparing the matrices in equations \cref{eq:Npermuted} and \cref{eq:Nfactorised}
    one can see that
    \begin{equation} \label{eq:dnn-expression}
        \begin{split}
            \opfont{L}_{22} \opfont{L}_{22}^*&= \opfont{N}_{22} - 
            \opfont{N}_{21} \opfont{P}_{11} \opfont{P}_{11}^* \opfont{N}_{11}^{-1}
            \opfont{P}_{11} \opfont{P}_{11}^* \opfont{N}_{12}= \\
            &= \opfont{N}_{22} - \opfont{N}_{21} \opfont{N}_{11}^{-1} \opfont{N}_{12}.
        \end{split}
    \end{equation}
    Since $\opfont{L}_{22}$ is a diagonal
    element in $\opfont{L}$, it has to be in $\Rring$ for $\text{(ii)}$ to be fulfilled. This implies that $\opfont{L}_{22} \opfont{L}_{22}^*$
    has to be in $\Rring$, meaning that factorisation on the form in $\text{(ii)}$ exists only if
    $\opfont{N}_{22} - \opfont{N}_{21} \opfont{N}_{11}^{-1} \opfont{N}_{12}\in\Rring$.

    We now computing $\opfont{N}_{22} - \opfont{N}_{21} \opfont{N}_{11}^{-1} \opfont{N}_{12}$
    with the goal of generating a contradiction.
    It can be verified through calculations that
    \begin{equation}
        \opfont{N}_{11}^{-1} = \sum_{k=1}^{n-1} \opfont{N}^{\funof{k}},
    \end{equation}
    where $n$ is the length of the cycle, and the operators
    $\opfont{N}^{\funof{k}}$ have entries equal to
    \begin{equation}
        \opfont{N}^{\funof{k}}_{ij} =
            \frac{(n-i)(n-j)}{k(k+1)} \funof{\shiftop^*} ^{i + k - n}\shiftop^{j + k - n}
    \end{equation}
    whenever $i,j \geq n+k$, with
    $\opfont{N}^{\funof{k}}_{ij} = \opfont{0}$ otherwise. From this one obtains that
    \begin{equation}
    \begin{split}
        &\opfont{N}_{22} - \opfont{N}_{21} \opfont{N}_{11}^{-1} \opfont{N}_{12}=\\ = \frac{n+1}{n}\opfont{1} &- \frac{1}{n} \shiftop ^n -\frac{1}{n} \left( \shiftop^* \right) ^n+
        \sum_{i=1}^{n-1} \frac{1}{i(i+1)}\left( \shiftop^* \right) ^i \shiftop^i.
    \end{split}
    \end{equation}
    This is not in $\Rring$. Hence there does not exist a factorisation fulfilling
    $\text{(ii)}$, and the proof is complete.
\end{proof}

\subsection{Computing with shift operators}\label{sec:comp}

In this section we develop a computational framework for obtaining and working with the factorisation in \Cref{thm:1}. We begin with a discussion of basic algebraic operations involving shift operators in \cref{sec:shiftalg,sec:poly,sec:pseudo}. The key results are then presented in \cref{sec:alg,sec:backsub}, where pseudocode for performing the factorisation is given in \Cref{alg:chol}, and the relevant back substitution computations are discussed. In addition to allowing for the solution of least squares problems such as those presented in \cref{sec:control}, we hope the reader is persuaded that other linear algebra problems and algorithms involving matrices of shift operators can be tackled with our tools. Implementations of all the described operations in both Matlab and Julia are provided in \cite{A25}.

\subsubsection{Shift operator algebra}\label{sec:shiftalg}

The basic building block of the matrices $\opfont{M}$ in \Cref{def:mon} is the forward shift $\shiftop$ given in \cref{eq:forwardshift}. It is important to note that operators that are combinations of $\shiftop$ and $\shiftop^*$ do not commute in general.
For example, $\shiftop \shiftop^* = \opfont{1}$, while
\[
\shiftop^* \shiftop \funof{y\sqfunof{0}, y\sqfunof{1}, y\sqfunof{2}, \ldots}
= \funof{0, y\sqfunof{1}, y\sqfunof{2}, \ldots }.
\]
Aside from this, addition, multiplication and adjoints work as expected. For example
\[
\funof{\shiftop+2\shiftop^*}\funof{\shiftop^*\shiftop+\shiftop}=\shiftop+\shiftop^2+2\shiftop^*\shiftop+2\funof{\shiftop^*}^2\shiftop
\]
and
\[
\funof{\shiftop+\funof{\shiftop^*}^2\shiftop}^* = \shiftop^*+\shiftop^*\funof{\shiftop}^2.
\]
Observe in particular that since $\shiftop \shiftop^* = \opfont{1}$, after performing these operations we always obtain an operator described by a linear combination of terms on the form $\funof{\shiftop^*}^i\shiftop^j$. This is precisely the motivation for the set of operators $\R[\shiftop,\shiftop^*]$ in \cref{eq:multset}. Note that addition, multiplication and adjoints are closed in $\R[\shiftop,\shiftop^*]$. 

\subsubsection{Shift operator computations}\label{sec:poly}

For operators in $\R[\shiftop,\shiftop^*]$, the algebraic operations described above can be performed on a computer by working in terms of the coefficients that describe the corresponding operators. To this end, consider the bijective linear map $\matmap{}:\R[\shiftop,\shiftop^*]\rightarrow{}\R^{n\times{}n}$ defined according to
\[
\sqfunof{\matmap{}\opfontlower{y}}_{ij}\coloneqq{}\alpha_{ij},
\]
where $\alpha_{ij}\in\R$ is the coefficient of the $\funof{\shiftop^*}^i\shiftop^j$ term in $\opfontlower{y}$. That is, $\matmap{}\opfontlower{y}$ returns the matrix of coefficients that specify a given operator $\opfontlower{y}\in\R[\shiftop,\shiftop^*]$. The operations of scaling, addition, multiplication and adjoints of operators $\opfontlower{y},\opfontlower{z}\in\R[\shiftop,\shiftop^*]$ can be equivalently specified in terms of standard operations over $\R$ applied to $\matmap{}\opfontlower{y}$ and $\matmap{}\opfontlower{z}$, as illustrated by the following commutative diagram:

\begin{center}
   \begin{tikzcd}[row sep=huge]
        \R[\shiftop,\shiftop^*]  \arrow[r, "\matmap{}"] \arrow[d, "{+, \times , ^*}"']
        & \R^{n\times n} \arrow[d, "{f_+, f_{\times}, f_*}" ] \\
        \R[\shiftop,\shiftop^*] 
        &  \R^{n\times n} \arrow[l, "\matmap^{-1}" ]
    \end{tikzcd} 
\end{center}

To perform these operators on a computer, we define a data type corresponding to $\matmap{}\opfontlower{y}$. By then implementing the functions $f_+, f_{\times}$ and $f_*$ to perform the addition, multiplications and adjoints for this type, we can perform the corresponding operations for operators in $\R[\shiftop,\shiftop^*]$ on a computer. The details are elementary but messy, so we omit them here, but they can be found in the provided software implementation.

The action of operators in $\R[\shiftop,\shiftop^*]$ on certain sequences in $\ltwo$ can also be described in a computable manner. It is of course necessary that the sequence has a computable description. For our purposes it suffices to consider the sequences defined by triples $\funof{C,A,x_0}$ with $C \in \R^{1 \times n}$, $A \in \R^{n \times n}$, and $x_0 \in \R^n$ according to
\[
\funof{Cx_0,CAx_0,CA^2x_0,\ldots{}}.
\]
The following lemma shows that applying any operator in $\R[\shiftop,\shiftop^*]$ to such a sequence results in a sequence that can also be described by a triple. Hence by introducing a data type in which sequences are represented as triples, computations on sequences can also be performed. As explained in \cref{sec:backsub}, this allows us to apply back substitution to solve equations over $\ltwo$ exactly, and forms the basis of our solutions to \gls{lqr} problems in \cref{sec:control}. 

\begin{lemma}\label{lem:sequence}
    Let $x_0 \in \R^n$, $A \in \R^{n \times n}$, $C \in \R^{1 \times n}$ and $\opfontlower{y} \in \R\sqfunof{\shiftop, \shiftop^*}$. If
    \begin{equation}
        w = \funof{C x_0, C A x_0, C A^2 x_0, \dots}\in\ltwo,
    \end{equation}
    then there exist $\bar{x}_0\in\R^{m}$, $\bar{A}\in\R^{m\times{}m}$, and $\bar{C}^{1\times{}m}$ such that
    \begin{equation}
        \opfontlower{y} w =\funof{\bar{C} \bar{x}_0, \bar{C} \bar{A} \bar{x}_0, \bar{C} \bar{A}^2 \bar{x}_0, \dots}\in\ltwo.
    \end{equation}
\end{lemma}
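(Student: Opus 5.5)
By linearity it suffices to treat a single monomial $\funof{\shiftop^*}^i\shiftop^j$. The class of sequences representable by triples is closed under addition: if $w_1$ and $w_2$ are given by $\funof{C_1,A_1,x_1}$ and $\funof{C_2,A_2,x_2}$, then $w_1+w_2$ is given by the block-diagonal triple
\[
\funof{\begin{bmatrix} C_1 & C_2\end{bmatrix},\ \begin{bmatrix} A_1 & 0\\ 0 & A_2\end{bmatrix},\ \begin{bmatrix} x_1\\ x_2\end{bmatrix}}.
\]
Scaling by $\alpha\in\R$ is absorbed into $C$. So I will show that $\shiftop$ and $\shiftop^*$ each map triple-sequences to triple-sequences, and then iterate.

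The forward shift is immediate. We have $\shiftop w = \funof{CAx_0, CA^2x_0,\ldots}$, which is the triple $\funof{C,A,Ax_0}$; equivalently, replace $x_0$ by $Ax_0$. Hence $\shiftop^j w$ is given by $\funof{C,A,A^jx_0}$.

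The backward shift prepends a zero: $\shiftop^*w = \funof{0, Cx_0, CAx_0,\ldots}$. I would realise this with one extra state. Take
\[
\bar{A}=\begin{bmatrix} A & 0\\ 0 & 0\end{bmatrix}, \qquad \bar{C}=\begin{bmatrix} CA^{-1}\ ? \end{bmatrix},
\]
but that needs $A$ to be invertible, so instead I use a nilpotent augmentation. Set
\[
\bar{x}_0=\begin{bmatrix}0\\ 1\end{bmatrix}\in\R^{n+1}, \qquad \bar{A}=\begin{bmatrix} A & x_0\\ 0 & 0\end{bmatrix}, \qquad \bar{C}=\begin{bmatrix} C & 0\end{bmatrix}.
\]
Then $\bar{C}\bar{x}_0=0$, and $\bar{A}\bar{x}_0=\begin{bmatrix}x_0\\0\end{bmatrix}$, so $\bar{C}\bar{A}^{k}\bar{x}_0 = CA^{k-1}x_0$ for $k\geq 1$, as required. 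Iterating $i$ times handles $\funof{\shiftop^*}^i$; alternatively, one can append a length-$i$ shift chain in a single step.

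Square-summability is preserved trivially, since shifts are isometries or coisometries and finite sums of $\ltwo$ sequences are in $\ltwo$. Combining these pieces, $\funof{\shiftop^*}^i\shiftop^jw$ has a triple representation of dimension $n+i$, and the full sum has dimension at most $\sum_{ij}(n+i)$. The only real subtlety is the backward shift, where the invertibility of $A$ cannot be assumed; the augmented-state construction avoids this, so I expect no genuine obstacle beyond bookkeeping of the block dimensions.
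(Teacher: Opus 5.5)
Your proof is correct and follows the same route as the paper. Both reduce to the generators $\shiftop$ and $\shiftop^*$, absorb $\shiftop$ directly into the triple, and realise $\shiftop^*$ by augmenting the state; you are in fact more explicit than the paper about closure under sums (block-diagonal triples), scaling and composition. The only real difference is your realisation of $\shiftop^*$: it adds a single state and places $x_0$ inside $\bar{A}$, whereas the paper takes $\bar{x}_0=\begin{bmatrix}x_0\\0\end{bmatrix}$, $\bar{A}=\begin{bmatrix}A&0\\I&0\end{bmatrix}$, $\bar{C}=\begin{bmatrix}0&C\end{bmatrix}$, which doubles the state dimension but keeps $(\bar{A},\bar{C})$ independent of $x_0$, so $x_0$ enters only through $\bar{x}_0$.
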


\begin{proof}
    It is enough to consider $\opfontlower{y} = \shiftop$ and $\opfontlower{y} = \shiftop^*$ as all operators $\opfontlower{y}\in\R[\shiftop,\shiftop^*]$ are linear combinations of these. We start with $\opfontlower{y} = \shiftop$. Then
    \begin{equation}
        \opfontlower{y} w =\funof{C A x_0, C A^2 x_0, C A^3 x_0, \dots}.
    \end{equation}
    From this we can see that $\bar{x}_0 = x_0$, $\bar{A} = A$, and $\bar{C} = C A$.

    We now consider $\opfontlower{y} = \shiftop^*$. Then
    \begin{equation}
        \opfontlower{y} w =\funof{0, C x_0, C A x_0, \dots}.
    \end{equation}
    We see that
    \begin{equation}
        \bar{x}_0 = \begin{bmatrix}
            x_0 \\ 0
        \end{bmatrix}, \quad
        \bar{A} = \begin{bmatrix}
            A & 0 \\ I & 0
        \end{bmatrix}, \quad
        \bar{C} = \begin{bmatrix}
            0 & C
        \end{bmatrix}
    \end{equation}
    gives the correct sequence.
\end{proof}

\subsubsection{Pseudoinverses and square roots}\label{sec:pseudo}

The Cholesky algorithm (and many others), require or benefit from additional operations such as inverses, pseudoinverses and square roots. Unfortunately even when these operations are well defined for $\opfontlower{y}\in\R[\shiftop,\shiftop^*]$, the resulting operators may not be in $\R[\shiftop,\shiftop^*]$. For example, the inverse of $\funof{\opfont{1}-\tfrac{1}{2}\shiftop}$ is a well defined operator on $\ltwo$, however since
\[
\funof{\opfont{1}-\tfrac{1}{2}\shiftop}^{-1}=\sum_{i=0}^\infty{}\funof{\tfrac{1}{2}\shiftop}^i,
\]
it is not an operator in $\R[\shiftop,\shiftop^*]$. This prevents us from additionally computing inverses for every invertible element $\opfontlower{y}\in\R[\shiftop,\shiftop^*]$ through the mapping $\mathrm{M}\, \opfontlower{y}$ in the previous subsection. We can however identify subsets of $\R[\shiftop,\shiftop^*]$ that are closed under the operation of pseudoinverses, and square roots. This is precisely the role of the set of operators $\Rring$ in \cref{eq:rring}. It is easily verified that $\R_{\infty}$ is closed under addition, multiplication and adjoints. The following lemma shows that $\R_\infty$ is also closed under pseudoinverses, and its positive semi-definite elements have positive semi-definite square roots in $\Rring$. 

\begin{lemma}\label{lem:pseudo}
    If $\opfontlower{y} \in \R_{\infty}$, then:
    \begin{enumerate}[(i)]
        \item the Moore-Penrose pseudoinverse $\opfontlower{y}^+$ is in $\R_{\infty}$;
        \item $\opfontlower{y}$ has a unique postive semi-definite square root $\sqrt{\opfontlower{y}}$ if and only if $\opfontlower{y}\succeq{}\opfont{0}$, and furthermore $\sqrt{\opfontlower{y}}\in\R_\infty$.
    \end{enumerate}
\end{lemma}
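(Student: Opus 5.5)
The plan is to identify $\Rring$ concretely with diagonal multiplication operators whose diagonals are eventually constant; both claims then reduce to pointwise statements about real numbers.

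\emph{Step 1 (diagonal representation).} The operator $\funof{\shiftop^*}^k\shiftop^k$ maps $\funof{y[0],y[1],\ldots}$ to $\funof{0,\ldots,0,y[k],y[k+1],\ldots}$, with the first $k$ entries set to zero. It is therefore the diagonal multiplication operator whose diagonal is the indicator of $\{t\geq k\}$. Consequently $\opfontlower{y}=\sum_{k=0}^n\alpha_k\funof{\shiftop^*}^k\shiftop^k$ acts as $(\opfontlower{y}w)[t]=d[t]\,w[t]$, where
\[
d[t]=\sum_{k=0}^{\min(t,n)}\alpha_k .
\]
In particular $d[t]$ is constant for $t\geq n$. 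Conversely, take any real sequence $d$ that is constant from some index $n$ onward. Setting $\alpha_0=d[0]$ and $\alpha_k=d[k]-d[k-1]$ for $1\leq k\leq n$ produces an element of $\Rring$ with diagonal $d$. Hence $\Rring$ is exactly the set of such diagonal operators. Sums, products and adjoints correspond to pointwise operations on $d$, and every such operator is bounded because $d$ takes only finitely many values.

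\emph{Step 2 (pseudoinverse).} Define $d^+[t]=1/d[t]$ when $d[t]\neq0$ and $d^+[t]=0$ otherwise. This sequence is again constant for $t\geq n$, so by Step 1 the corresponding diagonal operator $\opfontlower{z}$ lies in $\Rring$. The four Moore--Penrose identities $\opfontlower{y}\opfontlower{z}\opfontlower{y}=\opfontlower{y}$, $\opfontlower{z}\opfontlower{y}\opfontlower{z}=\opfontlower{z}$, and self-adjointness of $\opfontlower{y}\opfontlower{z}$ and $\opfontlower{z}\opfontlower{y}$ hold pointwise, since $d\,d^+d=d$, $d^+d\,d^+=d^+$, and everything is real and diagonal. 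The Moore--Penrose inverse of a bounded operator is unique when it exists, so $\opfontlower{y}^+=\opfontlower{z}\in\Rring$. Unlike a general bounded operator, $\opfontlower{y}$ has closed range: its nonzero diagonal values are bounded away from zero, since there are finitely many of them. This is why the pseudoinverse exists at all.

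\emph{Step 3 (square root).} Testing $\inner{w,\opfontlower{y}w}=\sum_t d[t]\,w[t]^2$ on the unit sequences shows that $\opfontlower{y}\succeq\opfont{0}$ if and only if $d[t]\geq0$ for all $t$. If this holds, the pointwise root $\sqrt{d[t]}$ is eventually constant, so it defines a positive semi-definite element of $\Rring$ whose square is $\opfontlower{y}$. Conversely, if some positive semi-definite $\opfontlower{s}$ satisfies $\opfontlower{s}^2=\opfontlower{y}$, then $\inner{w,\opfontlower{y}w}=\norm{\opfontlower{s}w}^2\geq0$, so $\opfontlower{y}\succeq\opfont{0}$. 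Uniqueness follows from the standard uniqueness of the positive square root of a bounded positive semi-definite operator on a Hilbert space.

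The only real obstacle is Step 1, namely recognising the diagonal, eventually-constant structure of $\Rring$ and verifying the coefficient bijection. Once that is in place, Steps 2 and 3 are routine pointwise arguments.
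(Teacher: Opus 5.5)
Your proof is correct and follows essentially the paper's route. Your diagonal sequence $d[t]$ is exactly the paper's partial-sum coefficient $\sigma_t$, and the paper's basis operators $\opfontlower{e}_k=\shiftop^{*k}\shiftop^k-\shiftop^{*k+1}\shiftop^{k+1}$ are precisely the coordinate projections behind your diagonal picture, so both arguments reduce the pseudoinverse and the square root to pointwise operations on eventually constant coefficients. Your version is slightly more explicit than the paper about closed range, the uniqueness of the Moore--Penrose inverse and of the positive square root, and the converse direction of (ii), all of which the paper leaves implicit.
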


\begin{proof}
    We will prove both statements by deriving formulas for the pseudoinverse and the square root based on the multiplication of two operators
    \begin{equation}\label{eq:twoop}
    \opfontlower{y} = \sum_{k = 0}^{\infty{}} \alpha_k\funof{\shiftop^{*k} \shiftop^k} \;\;\text{and}\; \;\opfontlower{z} = \sum_{k = 0}^{\infty} \beta_k\funof{\shiftop^{*k} \shiftop^k},
    \end{equation}
    where $\alpha_k=0$ for $k\geq{}n$ and $\beta_k=0$ for $k\geq{}m$.
    Operators in $\R_\infty$ can be equivalently rewritten in terms of the basis operators
    \[
    \opfontlower{e}_k = \shiftop^{*k} \shiftop^k - \shiftop^{* k+1} \shiftop^{k+1}.
    \]
    Denoting the partial sums $\sigma_i = \sum_{k=0}^i \alpha_k$ and $\tau_i = \sum_{k=0}^i 
    \beta_k$ it follows from \cref{eq:twoop} that
    \begin{equation}\label{eq:psum}
        \opfontlower{y} = \sum_{k=0}^{\infty} \sigma_k \opfontlower{e}_k\;\;\text{and}\;\;\opfontlower{z} = \sum_{k=0}^{\infty} \tau_k \opfontlower{e}_k.
    \end{equation}
    This representation is invertible (that is given any $\sigma_k,\tau_k$ such that $\sigma_{k+1}=\sigma_{k}$ and $\tau_{k+1}=\tau_{k}$ for sufficiently large $k$, representations on the form in \cref{eq:psum} can always be rewritten on the form in \cref{eq:twoop}). Now using the fact that $\opfontlower{e}_i^2=\opfontlower{e}_i$ and $\opfontlower{e}_i\opfontlower{e}_j=\opfont{0}$ if $i\neq{}j$, we see that
    \begin{equation}
        \opfontlower{y z} = \sum_{k=0}^{\infty} \sigma_k \tau_k \opfontlower{e}_k.
    \end{equation}
    That is multiplication of operators in $\R_\infty$ corresponds to multiplying their partial sum coefficients. From this observation it follows that $\opfontlower{z}$ is the Moore-Penrose pseudoinverse of $\opfontlower{y}$ if and only if
    \begin{equation}
        \tau_k = \begin{cases}
            1/\sigma_k  \quad &\text{if } ~\sigma_k \neq 0 \\
            0 \quad \quad &\text{otherwise.}
        \end{cases}
    \end{equation}
    Similarly we see that $\opfontlower{y}\succeq{}\opfont{0}$ if and only if $\sigma_k\geq{}0$ for all $k$, and the unique positive semi-definite square root can be obtained by taking the square roots of the partial sum coefficients.
\end{proof}

\subsubsection{The factorisation algorithm}\label{sec:alg}

The key step in finding the factorisation in the proof of \Cref{thm:1} is \cref{eq:redstep}. Based on this, the operators $\opfont{P}$ and $\opfont{L}$ are recursively defined in \cref{eq:factred}. Critically all the calculations in these steps are compatible with the computational framework from \cref{sec:shiftalg,sec:poly,sec:pseudo}. That is, pseudoinverses and square roots are only ever applied to positive semi-definite operators in $\Rring$, and otherwise only additions, multiplications and adjoints are required, all of which are closed in $\R[\shiftop,\shiftop^*]$. This means that we can implement the factorisation outlined in the proof of \Cref{thm:1} in our computational framework. Pseudocode for this procedure is presented in \Cref{alg:chol}.

\begin{algorithm}
\caption{The Cholesky algorithm for $\opfont{L}\opfont{L}^*=\funof{\opfont{MP}}^*\opfont{MP}$}
\label{alg:chol}
\SetKwFunction{Cholesky}{cholesky}
\SetKwFunction{LeafEdgeFirst}{leafEdgeFirstPermutation}
\SetKwFunction{VerteciesFirst}{verteciesFirstPermutation}
\SetKwFunction{FindIndex}{findIndex}
\SetKwProg{Fn}{Function}{:}{end}

\Fn{\Cholesky{$\opfont{M}$}}{

    \If{$\opfont{M}$ has one column}{
        $\opfont{L} = \sqrt{\opfont{M}^* \opfont{M}}$ \;
        $\opfont{P}=\opfont{1}$\;
    }
    \Else{
        $\opfont{P} = \LeafEdgeFirst(\opfont{M})$ \;
        $\opfont{M} = \opfont{MP}$ \;
        $\opfont{Q} = \VerteciesFirst(\opfont{M})$ \hspace{-0.09cm}\;
        \opfont{M}=\opfont{QM}\;      
        $\opfont{M}_{11} = \opfont{M}[1,1]$ \;
        $\opfont{M}_{21} = \opfont{M}[2,1]$ \;
        $\opfont{M}_{22} = \opfont{M}[2,2\!:\!\mathrm{end}]$ \;
        $\opfont{M}_{32} = \opfont{M}[3\!:\!\mathrm{end},2\!:\!\mathrm{end}]$ \;

        $\opfont{N}_{11} = \opfont{M}_{11}^* \opfont{M}_{11} + \opfont{M}_{21}^* \opfont{M}_{21}$ \;
        $\opfont{N}_{21} = \opfont{M}_{22}^* \opfont{M}_{21}$ \;

        $\opfont{M}_{\mathrm{red}} =
        \begin{bmatrix}
            \sqrt{\opfont{1} - \opfont{M}_{21}\opfont{N}_{11}^{+}\opfont{M}_{21}^*}\; \opfont{M}_{22} \\
            \opfont{M}_{32}
        \end{bmatrix}$ \;

        $\opfont{L}_{\mathrm{red}}, \opfont{P}_{\mathrm{red}}
        = \Cholesky(\opfont{M}_{\mathrm{red}})$ \;

        $\opfont{L} =
        \begin{bmatrix}
            \sqrt{\opfont{N}_{11}} & \opfont{0} \\
            \opfont{N}_{21}\sqrt{\opfont{N}_{11}}^{-1} & \opfont{L}_{\mathrm{red}}
        \end{bmatrix}$ \;

        $\opfont{P} =
        \opfont{P}\!
        \begin{bmatrix}
            \opfont{1} & \opfont{0} \\
            \opfont{0} & \opfont{P}_{\mathrm{red}}
        \end{bmatrix}$ \;
    }

    \Return{$\opfont{L}, \opfont{P}$} \;
}
\vspace{0.1cm}
\Fn{\LeafEdgeFirst{$\opfont{M}$}}{

    leafEdgeNotFound = true \;
    c = 0 \;

    \While{leafEdgeNotFound}{
        c = c + 1 \;
        $\opfont{M}_{c} = \opfont{M}[:,c]$ \;

        $[r_1, r_2] = $ Non-zero indices of $\opfont{M}_{c}$ \;

        \If{($\opfont{M}[r_1,2\!:\!\mathrm{end}] = 0$) OR  ($\opfont{M}[r_2,2\!:\!\mathrm{end}] = 0$)}{
            leafEdgeNotFound = false \;
        }
    }

    $\opfont{P} =$ Permutation matrix for swapping column 1 and column c \;
    \Return{$\opfont{P}$} \;
}
\vspace{0.1cm}
\Fn{\VerteciesFirst{$\opfont{M}$}}{

    $\opfont{M}_1 = \opfont{M}[:,1]$ \;
    $[r_1, r_2] =$ Non-zero indices of $\opfont{M}_1$ \;

    $\opfont{P}_1 =$ Permutation matrix for swapping row 1 with row $r_1$ \;
    $\opfont{P}_2 =$ Permutation matrix for swapping row 2 with row $r_2$ \;

    $\opfont{P} = \opfont{P}_1 \opfont{P}_2$ \;

    \Return{$\opfont{P}$} \;
}

\end{algorithm}

\begin{figure}[t]
        \centering
        \begin{tikzpicture}[
        every node/.append style={circle,draw=black, text=black,inner sep=0pt, minimum size=2.5mm}
    ]

    \pgfmathsetmacro{\length}{2}

    \node [label={$v_{1}$}]at (3*\length,0)(v1) { };
    \node [label={$v_{2}$}]at (2*\length,0)(v2) { };
    \node [label={$v_{3}$}]at (\length,0)(v3) { };
    \node [label={$v_{4}$}]at (0,0)(v4) { };
    \path (v2) edge[-] node[below, draw=none] {$e_{1}$} (v1);
    \path (v3) edge[-] node[below, draw=none] {$e_{2}$} (v2);
    \path (v4) edge[-] node[below, draw=none] {$e_{3}$} (v3);

\end{tikzpicture}
        \caption{Graph specifying the sparsity pattern of $\opfont{M}$ in \cref{eq:graph}.}
        \label{fig:example-graph1}
\end{figure}

To illustrate the steps in \Cref{alg:chol}, consider the matrix
\begin{equation}\label{eq:graph}
    \opfont{M} = \begin{bmatrix} \shiftop^* & \opfont{0}& \opfont{0} \\
    -\opfont{1} & \shiftop^* & \opfont{0} \\
    \opfont{0} & -\opfont{1} & \shiftop^* \\
    \opfont{0} & \opfont{0} & -\opfont{1}\end{bmatrix} \in \mathbb{M}_{\graphfont{G}},
\end{equation}
where \graphfont{G} is the graph in \Cref{fig:example-graph1}. In the first step of the recursion,  $\opfont{P}^{\text{(1)}}=\opfont{I}$ since the first column of $\opfont{M}$ already corresponds to a leaf edge. Similarly $\opfont{Q}^{\text{(1)}} = \opfont{I}$ since the first column in $\opfont{M}$ has its two non-zero entries in row 1 and row 2. The algorithm then proceeds to compute
\begin{equation}
    \opfont{M}_{\text{red}}^{\text{(1)}} = \begin{bmatrix}
        \sqrt{\opfont{1} - \frac{1}{2}\opfont{1}} \begin{bmatrix} \shiftop^* & 0 \end{bmatrix} \\
        \begin{bmatrix} -\opfont{1} & \shiftop^* \\ \opfont{0} & -\opfont{1} \end{bmatrix} \end{bmatrix}=
            \begin{bmatrix} \frac{1}{\sqrt{2}} \shiftop^* & \opfont{0}\\
            -\opfont{1} & \shiftop^* \\ \opfont{0} & -\opfont{1}\end{bmatrix}.
\end{equation}
The second recursive step then begins as \texttt{cholesky} is called on $\opfont{M}_{\text{red}}^{\text{(1)}}$. In this step $\opfont{P}^{\text{(2)}}=\opfont{I}$ and $\opfont{Q}^{\text{(2)}}= \opfont{I}$ once more, and
\begin{equation}
    \opfont{M}_{\text{red}}^{\text{(2)}} = \begin{bmatrix} \frac{1}{\sqrt{3}} \shiftop^* \\ -\opfont{1} \end{bmatrix}.
\end{equation}
The third recursive step then arrives at the base case, yielding
\begin{equation}
    \opfont{L}_{\text{red}}^{\text{(3)}} = \sqrt{\opfont{M}_{\text{red}}^{\text{(2)}*}\opfont{M}_{\text{red}}^{\text{(2)}}} = \tfrac{2}{\sqrt{3}}\opfont{1}
\end{equation}
and $\opfont{P}_{\text{red}}^{\text{(3)}}=\opfont{1}$. From this $\opfont{P}_{\text{red}}^{\text{(2)}}=\opfont{I}$ and
\begin{equation}
    \opfont{L}_{\text{red}}^{\text{(2)}} = 
    \begin{bmatrix}  \tfrac{\sqrt{3} }{\sqrt{2} }\opfont{1}& \opfont{0} \\
    -\tfrac{\sqrt{2} }{\sqrt{3} } \shiftop &\opfont{L}_{\text{red}}^{\text{(3)}} \end{bmatrix} = \begin{bmatrix}  \tfrac{\sqrt{3} }{\sqrt{2} }\opfont{1}& \opfont{0} \\
    -\tfrac{\sqrt{2} }{\sqrt{3} } \shiftop &\tfrac{2}{\sqrt{3}}\opfont{1} \end{bmatrix},
\end{equation}
and finally $\opfont{P}=\opfont{I}$ and
\begin{equation}\label{eq:L}
    \opfont{L}= \begin{bmatrix} \sqrt{\opfont{2}} & \opfont{0} & \opfont{0} \\
        -\frac{1}{\sqrt{2} }\shiftop & \frac{\sqrt{3}}{\sqrt{2}}  \opfont{1}& \opfont{0} \\
    \opfont{0} & -\frac{\sqrt{2}}{\sqrt{3}}\shiftop & \frac{2}{\sqrt{3}} \opfont{1}\end{bmatrix}
\end{equation}
are computed.

\subsubsection{Back substitution computations}\label{sec:backsub}

The factorisation from \Cref{alg:chol} can be used to solve the system of equations
\begin{equation}\label{eq:linsys}
\opfont{M}^*\opfont{M}z = w
\end{equation}
for $z\in\ltwo^n$ via back substitution whenever $\opfont{M}^*\opfont{M}$ is invertible. In particular the system of equations
\[
\opfont{L}v = \bar{w},
\]
where $\bar{w}=\opfont{P}^*w$, is first solved for each element of $v$ sequentially according to
\begin{equation}\label{eq:backsub}
v_k=\opfont{L}_{kk}^{-1}\funof{\bar{w}_k-\sum_{i = 1}^{k-1}\opfont{L}_{ki}v_i}.
\end{equation}
From this $z$ can be similarly obtained one entry at a time from
\[
\opfont{L}^*\opfont{P}^*z=v.
\]
Since the diagonal entries of $\opfont{L}$ are all in $\Rring$, this can be achieved by applying operators in $\R[\shiftop,\shiftop^*]$ to sequences in $\ltwo$. By \Cref{lem:sequence} it then follows that given any sequence $w$ that can be written on the form
\[
w=\funof{Cx_0,CAx_0,CA^2x_0,\ldots{}},
\]
the triple describing the solution $z$ can be computed using our software tools.

In many applications, it is only necessary to compute the first element of the sequence $z$. We will illustrate this fact in the context of the \gls{lqr} problem in \cref{sec:lqrbacksub}. In this case examining the back substitution steps reveals further simplifications. The following lemma illustrates this fact in a manner tailored specifically to the examples in \cref{sec:control}.

\begin{lemma}\label{thm:control-law}
    Let $\opfont{L} = L_0 + L_1 \shiftop$,
    where $L_0\in\R^{n\times{}n}$ is a lower triangular matrix and $L_1\in\R^{n\times{}n}$ is lower triangular matrix with zeros on the diagonal. If $L_0$ is invertible, then given any $r\in\funof{0,1}$ the solution to
    \begin{equation}
        \opfont{LL}^* z = \funof{r^0K_2x_0,r^1K_2x_0,r^2K_2x_0,\ldots{}}
    \end{equation}
    satisfies
    \begin{equation}
        K_1 z\sqfunof{0} = r^0K_2 x_0,  
    \end{equation}
    where $K_1 = \funof{L_0 + L_1 r} L_0^\top.$
\end{lemma}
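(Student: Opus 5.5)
The plan is to carry out the two back substitution stages described in \cref{sec:backsub} explicitly, using the fact that the right-hand side is a geometric sequence. Write $b = K_2x_0$, so the right-hand side is $w=\funof{r^0b,r^1b,r^2b,\ldots{}}$, which lies in $\ltwo^n$ because $r\in\funof{0,1}$. The solution is $z=\funof{\opfont{L}^*}^{-1}v$, where $v$ solves $\opfont{L}v=w$.

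First, I will check that $\opfont{L}$ and $\opfont{L}^*$ are invertible on $\ltwo^n$, so that $v$ and $z$ are uniquely defined.
\begin{itemize}
\item Since $L_0$ is lower triangular and invertible, and $L_1$ is strictly lower triangular, $\opfont{L}=L_0+L_1\shiftop$ is a lower triangular matrix of operators.
\item Its diagonal entries are the nonzero scalars $\funof{L_0}_{kk}$, and its off-diagonal entries are bounded operators.
\item The recursion \cref{eq:backsub} therefore defines a bounded inverse. It only uses scalar inverses of the diagonal and finitely many applications of bounded operators.
\item The same argument applies to the upper triangular $\opfont{L}^*=L_0^\top+L_1^\top\shiftop^*$, using forward substitution from the last entry instead.
\end{itemize}

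Second, I will solve $\opfont{L}v=w$ with the ansatz $v\sqfunof{k}=r^kc$ for a constant vector $c\in\R^n$. This sequence is in $\ltwo^n$, and $\funof{\shiftop v}\sqfunof{k}=v\sqfunof{k+1}=r^{k+1}c$. Hence
\[
\funof{\opfont{L}v}\sqfunof{k}=r^k\funof{L_0+rL_1}c .
\]
The matrix $L_0+rL_1$ is lower triangular with the same diagonal as $L_0$, so it is invertible. Choosing $c=\funof{L_0+rL_1}^{-1}b$ gives $\opfont{L}v=w$, and by uniqueness this is the solution. In particular $v\sqfunof{0}=c$.

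Third, I will read off $z\sqfunof{0}$ from $\opfont{L}^*z=v$ without computing the rest of $z$. Since $\funof{\shiftop^*z}\sqfunof{0}=0$, the zeroth component of this equation is
\[
L_0^\top z\sqfunof{0}=v\sqfunof{0}=c .
\]
Multiplying through by $L_0+rL_1$ then gives
\[
\funof{L_0+rL_1}L_0^\top z\sqfunof{0}=b=r^0K_2x_0,
\]
which is exactly $K_1z\sqfunof{0}=r^0K_2x_0$.

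The main obstacle is the invertibility of $\opfont{L}$ and $\opfont{L}^*$ in the first step, since the argument rests on the guessed $v$ being the unique solution. The key point there is that every diagonal entry of $\opfont{L}$ is a nonzero scalar multiple of the identity, rather than a general element of $\Rring$, so each step of the triangular solve inverts only a scalar. The remaining steps are direct computations.
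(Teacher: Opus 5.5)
Your proof is correct. It shares the paper's skeleton: introduce $v$ with $\opfont{L}v=w$ and $\opfont{L}^*z=v$, then read off $L_0^\top z\sqfunof{0}=v\sqfunof{0}$ from the zeroth component, because $\shiftop^*z$ vanishes there. Where you differ is in how you obtain $v\sqfunof{0}$.

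\textbf{The paper's route.} The paper never identifies the whole sequence $v$. It multiplies the $k$-th component of $\opfont{L}v=w$ by $L_0^{-1}$, which gives $v\sqfunof{k}+\bar{L}v\sqfunof{k+1}=L_0^{-1}r^kK_2x_0$ with $\bar{L}=L_0^{-1}L_1$ strictly lower triangular. It unrolls this $n$ times and uses $\bar{L}^n=0$ to get a terminating Neumann series,
\[
v\sqfunof{0}=\sum_{i=0}^{n-1}\funof{-r\bar{L}}^iL_0^{-1}K_2x_0=\funof{L_0+rL_1}^{-1}K_2x_0 .
\]
This pins down $v\sqfunof{0}$ for \emph{every} sequence satisfying the componentwise equations. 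No uniqueness, and no invertibility of $\opfont{L}$ on $\ltwo^n$, is needed.

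\textbf{Your route.} You guess $v\sqfunof{k}=r^kc$, so you need your first step (invertibility of the triangular $\opfont{L}$, whose diagonal entries are nonzero scalars) to know that the guess is the solution. That step is itself a nilpotency argument on the strictly triangular part, so both proofs ultimately rest on the same structure. In exchange you gain three things:
\begin{itemize}
\item you get all of $v$ in closed form;
\item invertibility of $L_0+rL_1$ is immediate from its diagonal;
\item you avoid the operator-ordering bookkeeping in the unrolled recursion, where $L_0^{-1}$ must sit to the right of the powers of $\bar{L}$.
\end{itemize}
Your first step also makes explicit that ``the solution'' $z$ exists and is unique, which the paper takes for granted.
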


\begin{remark}\label{rem:backsubR}
    Observe in particular that in the statement of \Cref{thm:control-law}, $z[0]$ can also be obtained through back substitution (over $\R^n$ rather than $\ltwo^n$), since the factor $K_1$ is given by the product of triangular matrices that are at least as sparse as $\opfont{L}$.
\end{remark}

\begin{remark}
    The above discussions apply to the case that \cref{eq:backsub} has a unique solution for all $v\in\ltwo^n$.  However \Cref{alg:chol} is guaranteed to run for any $\opfont{M}\in\mathbb{M}_\mathcal{G}$, even if $\opfont{M}^*\opfont{M}$ is only positive semi-definite (and hence not invertible). It is easily seen that $\opfont{M}^*\opfont{M}$ is invertible if and only if the diagonal elements of $\opfont{L}$ are invertible, so our software tools can easily detect when the described back substitution procedure applies. Extensions to the semi-definite case can likely be made following standard approaches from linear algebra, but we do not pursue this line further here.
\end{remark}
\begin{proof}
    We start by introducing an intermediate variable $v$ such that
    \begin{equation}
        \begin{cases}
            \opfont{L} v &= \funof{r^0K_2x_0,r^1K_2x_0,r^2K_2x_0,\ldots{}} \\
            \opfont{L}^* z &= v.
        \end{cases}
    \end{equation}
    Because $\opfont{L}^* = L_0^\top + L_1^\top \shiftop^*$ we find that $z\sqfunof{0} = \funof{L_0^\top}^{-1} v\sqfunof{0}. $
    Because of this we only need to compute $v\sqfunof{0}$, not the entire sequence $v$.
    In general we find that
    \begin{equation}
        v\sqfunof{k} + \overline{L} v\sqfunof{k+1} = L_0^{-1} r^kK_2x_0,
    \end{equation}
    where $\overline{L} = L_0^{-1} L_1$. This means that
    \begin{equation}
        v\sqfunof{0} + \funof{-1}^{n+1} \overline{L}^n v\sqfunof{k+n} =
        L_0^{-1} \sum_{i}^{n} \funof{-\overline{L}}^i r^{k+i}K_2x_0.   
    \end{equation}
    Because of the fact that $\overline{L}$
    is nilpotent due to the structure in $L_0$ and $L_1$, we find that
    \begin{equation}
        v\sqfunof{0} =  
        L_0^{-1} \sum_{i}^{\infty} \funof{-\overline{L}r}^i K_2 x\sqfunof{0}.   
    \end{equation}
    The series in the expression above can be recognised as the Neumann series.
    Computation then leads to
    \begin{equation}
        \funof{L_0 + L_1 r} L_0^\top z\sqfunof{0} = K_2 x\sqfunof{0}.
    \end{equation}
\end{proof}

\subsection{Sparse factorisations}\label{sec:sparse}

In this section we show that \Cref{alg:chol} always produces a factorisation that is sparsity preserving in the sense that
\begin{equation}\label{eq:sparsity}
[\funof{\opfont{MP}}^*\opfont{MP}]_{ij}=\opfont{0}\implies{}\opfont{L}_{ij}=\opfont{0}.
\end{equation}
That is, $\opfont{L}$ is at least as sparse as $\funof{\opfont{MP}}^*\opfont{MP}$. This property makes back substitution particularly efficient, and is responsible for the extremely sparse control law factorisations described in the introduction and \cref{sec:control}.

The sparsity preserving property of \Cref{alg:chol} essentially follows from known results based on chordal graphs. We first recall these results. A graph is said to be chordal if each of its cycles of length four or greater has a chord. Consider now a graph with one vertex for each row of the matrix $\opfont{M}^*\opfont{M}$, and an edge between vertices $i$ and $j$ if
\[
[\opfont{M}^*\opfont{M}]_{ij}\neq{}\opfont{0}.
\]
It is well known that if this graph is chordal, then after permuting the rows and columns into a so called perfect elimination ordering, the Cholesky algorithm proceeds without fill in (i.e. if the Cholesky algorithm executes, \cref{eq:sparsity} holds) \cite{KR12}.

We now connect these results to our setting. The only challenge is that \Cref{def:mon} constrains the sparsity pattern of $\opfont{M}$ using a graph, while the classical results for Cholesky factorisation depend on a graph associated with the sparsity pattern of $\opfont{M}^*\opfont{M}$. Consider the following definition, which is illustrated in \Cref{fig:edge-graph}.

\begin{definition}\label{def:edge}
    Let $\edgegraph\funof{\graphfont{G}}$ be the graph with one vertex for
    every edge in $\graphfont{G}$, and edges between two vertices if the two corresponding edges in $\graphfont{G}$ form a path of length two.
\end{definition}

\begin{figure}[t]
    \centering
    \begin{tikzpicture}[scale=0.6,
        every node/.append style={circle,draw=black, text=black,inner sep=0pt, minimum size=2mm}
    ]

    \pgfmathsetmacro{\length}{2}    
    \pgfmathsetmacro{\angle}{120}    
    \pgfmathsetmacro{\distText}{2.5}  
    \pgfmathsetmacro{\distFig}{8}   

    \coordinate (BaseA) at (0,0);
    \node at (BaseA)(v1) { };
    \coordinate (CenterA) at ($(BaseA)+(\length,0)$);
    \node at (CenterA)(v2) { };
    \node  at ($(CenterA) + (\angle/2:\length)$) (v3) {};
    \node  at ($(CenterA) + (-\angle/2:\length)$) (v4) {};
    \node  at ($(CenterA) + (-\angle/2:\length) + (\length,0)$) (v5) {};
    \path (v1) edge[-] node[auto, draw=none] {} (v2);
    \path (v2) edge[-] node[auto, draw=none] {} (v3);
    \path (v2) edge[-] node[auto, draw=none] {} (v4);
    \path (v4) edge[-] node[auto, draw=none] {} (v5);

    \coordinate (TextA) at ($(CenterA)+(0,-\distText)$);
    \node[draw=none] at (TextA) (ta) {a.)};

    \coordinate (BaseB) at (\distFig,0);
    \node [gray] at (BaseB)(v1b) { };
    \coordinate (CenterB) at ($(BaseB)+(\length,0)$);
    \node [gray] at (CenterB)(v2b) { };
    \coordinate (upV) at ($(CenterB) + (\angle/2:\length)$);
    \node [gray]  at (upV) (v3b) {};
    \coordinate (downV) at ($(CenterB) + (-\angle/2:\length)$);
    \node [gray]  at (downV) (v4b) {};
    \coordinate (lastV) at ($(downV)+(\length,0)$);
    \node [gray]  at (lastV) (v5b) {};
    \path (v1b) edge[dashed, gray] node[auto, draw=none] {} (v2b);
    \path (v2b) edge[dashed, gray] node[auto, draw=none] {} (v3b);
    \path (v2b) edge[dashed, gray] node[auto, draw=none] {} (v4b);
    \path (v4b) edge[dashed, gray] node[auto, draw=none] {} (v5b);

    \node at ($0.5*(BaseB)+0.5*(CenterB)$)(e1) {};
    \node at ($0.5*(upV)+0.5*(CenterB)$)(e2) {};
    \node at ($0.5*(downV)+0.5*(CenterB)$)(e3) {};
    \node at ($0.5*(downV)+0.5*(lastV)$)(e4) {};

    \path (e1) edge[-] node[auto, draw=none] {} (e2);
    \path (e1) edge[-] node[auto, draw=none] {} (e3);
    \path (e3) edge[-] node[auto, draw=none] {} (e2);
    \path (e3) edge[-] node[auto, draw=none] {} (e4);

    \coordinate (TextB) at ($(CenterB)+(0,-\distText)$);
    \node[draw=none] at (TextB) (tb) {b.)};

\end{tikzpicture}
    \caption{Illustration of the graphs from \Cref{def:edge}.  a.) shows a graph $\G$, while b.) shows $\G$ in
    grey dashed lines and $\edgegraph\funof{\G}$ in black. }
    \label{fig:edge-graph}
\end{figure}

The relevance of
$\edgegraph\funof{\graphfont{G}}$ is that it describes the sparsity
pattern of $\opfont{M}^*\opfont{M}$ based on the graph $\mathcal{G}$ that characterises the sparsity of $\opfont{M}$ as in \Cref{def:mon}. More specifically, for $i\neq{}j$, $\funof{\opfont{M}^*\opfont{M}}_{ij}\neq{}\opfont{0}$ for some $\opfont{M}\in\mathbb{M}_{\graphfont{G}}$ if and only if vertices $i$ and $j$
are connected in $\edgegraph\funof{\G}$. This shows that after a suitable permutation of the columns of $\opfont{M}$, the Cholesky algorithm is guaranteed to proceed without fill in if $\edgegraph\funof{\graphfont{G}}$ is chordal.

The following theorem precisely characterises chordality properties of $\edgegraph\funof{\G}$ in terms of properties of $\G$. Since tree graphs have no cycles, this result is sufficient to show that all the factorisations characterised by \Cref{thm:1} (and computed by \Cref{alg:chol}) satisfy \cref{eq:sparsity}. 

\begin{theorem}\label{thm:sparsity}
    A graph $\graphfont{G}$ has no cycles of length four or greater if and
    only if $\edgegraph\funof{\graphfont{G}}$ is chordal.
\end{theorem}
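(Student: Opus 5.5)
We prove the two directions separately. Throughout, edge$(\G)$ is the line graph of $\G$: its vertices are the edges of $\G$, and two of them are adjacent exactly when the corresponding edges share an endpoint, since two distinct edges sharing a vertex form a path of length two.

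\emph{Necessity.} Suppose $\G$ contains a cycle $v_1v_2\cdots v_kv_1$ with $k\geq 4$, and let $e_i=\cfunof{v_i,v_{i+1}}$ (indices mod $k$). In $\edgegraph\funof{\G}$ the vertices $e_1,\ldots,e_k$ form a cycle of length $k\geq 4$, because consecutive edges share a vertex. We must check that this cycle has no chord. Suppose $e_i$ and $e_j$ with $j\neq i\pm1$ were adjacent. Then they would share an endpoint. But the vertices $v_1,\ldots,v_k$ of a cycle are distinct, so non-consecutive edges of the cycle are disjoint. Hence $\edgegraph\funof{\G}$ contains a chordless cycle of length at least four and is not chordal.

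\emph{Sufficiency.} Assume every cycle of $\G$ has length at most three, and suppose for contradiction that $\edgegraph\funof{\G}$ has a chordless cycle $f_1f_2\cdots f_kf_1$ with $k\geq 4$. Each adjacency $f_i\sim f_{i+1}$ is witnessed by a shared vertex $w_i\in f_i\cap f_{i+1}$. Because $f_i$ is an edge, it contains both $w_{i-1}$ and $w_i$.

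First we show $w_{i-1}\neq w_i$. If they were equal, $f_{i-1}$ and $f_{i+1}$ would share the vertex $w_i$ and so be adjacent. Since $k\geq 4$, the pair $f_{i-1},f_{i+1}$ is non-consecutive, so this adjacency would be a chord. Hence $w_{i-1}\neq w_i$, and $f_i=\cfunof{w_{i-1},w_i}$.

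Next we show the $w_i$ are pairwise distinct. If $w_i=w_j$ with $i\neq j$, then $f_i,f_{i+1},f_j,f_{j+1}$ all contain this vertex and are pairwise adjacent. Because $k\geq 4$ these four edges include a non-consecutive pair, giving a chord. The one case needing care is $j=i\pm1$, but that is exactly the case excluded in the previous step.

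It follows that $w_0w_1\cdots w_{k-1}w_0$ is a cycle in $\G$ of length $k\geq 4$ with distinct vertices, using the distinct edges $f_i$. This contradicts the assumption, so $\edgegraph\funof{\G}$ is chordal.

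The main obstacle is this bookkeeping in the sufficiency direction, namely ruling out repeated witness vertices, which is where chordlessness and $k\geq4$ are used.

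Finally, since a tree has no cycles at all, the theorem shows that $\edgegraph\funof{\G}$ is chordal for every tree $\G$. This is the conclusion needed for the sparsity property \cref{eq:sparsity}.
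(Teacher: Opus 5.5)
Your proof is correct and uses the same cycle correspondence as the paper: a cycle of length $k\geq 4$ in $\G$ gives a chordless $k$-cycle in $\edgegraph\funof{\G}$, and conversely. For the converse, your witness-vertex argument (using chordlessness and $k\geq 4$ to show that $w_{i-1}\neq w_i$ and that the $w_i$ are pairwise distinct) supplies the details the paper compresses into ``reversing the above arguments''.
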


\begin{remark}
    The function \texttt{leafEdgeFirstPermuation} in \Cref{alg:chol} provides a permutation matrix such that the rows and columns of $\opfont{MP}^*\opfont{MP}$ are arranged into a perfect elimination ordering.
\end{remark}

\begin{proof}
We prove the result by showing that $\mathcal{G}$ has a cycle of length 4 or more if and only if $\edgegraph\funof{\graphfont{G}}$ is not chordal. Suppose that $\graphfont{G}$ contains a cycle of length $m\geq{}4$. 
Therefore under a suitable labelling of the vertices of $\graphfont{G}$, the set of edges
\[
E_{\text{cycle}}=\cfunof{\cfunof{v_1,v_2},\cfunof{v_2,v_3},\ldots{},\cfunof{v_{m-1},v_m},\cfunof{v_m,v_1}}
\]
form a cycle in $\mathcal{G}$. There is one vertex in 
$\edgegraph\funof{\graphfont{G}}$ for each edge in $E_{\text{cycle}}$. 
Furthermore since there is a bijection between the edges of $\mathcal{G}$
and the vertices of $\edgegraph\funof{\graphfont{G}}$, there is an edge 
between any pair of these vertices in $\edgegraph\funof{\graphfont{G}}$ if
and only if the corresponding edges in $E_{\text{cycle}}$ satisfy 
\[
e_i\cap{}e_j\neq\emptyset{}.
\]
Therefore $\edgegraph\funof{\graphfont{G}}$ contains a cycle of length
$m\geq{}4$ without a chord, and is not chordal. Conversely if
$\edgegraph\funof{\graphfont{G}}$ is not chordal it must contain a cycle 
of length four or greater. Reversing the above arguments then implies that
$\mathcal{G}$ must have a cycle of length four or greater as required.
\end{proof}

\subsection{Comparison with spectral factorisation}\label{sec:spec}

In this section we will make a comparison between the factorisations arising from \Cref{thm:1}, and spectral factorisation. Despite strong similarities, we will see that in general spectral factorisation is not necessarily sparsity preserving in the sense of \cref{eq:sparsity}. This suggests that the non-commutative factorisation framework established in this paper is necessary to fully exploit sparsity in important application areas, such as the \gls{lqr} problem.

The factorisation problem in \Cref{thm:1} is central to the solution of the \gls{lqr} problem. However as we shall see in \cref{sec:control}, in this setting the entries of the matrices $\opfont{M}$ are additionally constrained to lie in
\[
\R[\shiftop^*]=\cfunof{\sum_{j=0}^{n-1}\alpha_j\funof{\shiftop^*}^j:\alpha_j\in\R}.
\]
Clearly $\R[\shiftop^*]\subset\R[\shiftop,\shiftop^*]$, however the operators in $\R[\shiftop^*]$ commute. This fact is exploited by the process of spectral factorisation, which seeks to find an operator $\opfont{L}_{\mathrm{spec}}$ such that,
\[
\opfont{M}^*\opfont{M}=\opfont{L}_{\mathrm{spec}}\opfont{L}_{\mathrm{spec}}^*.
\]
Similarly to the factors from \Cref{thm:1}, the spectral factor $\opfont{L}_{\mathrm{spec}}$ is invertible whenever $\opfont{M}^*\opfont{M}$ is invertible. However the entries in $\opfont{L}_{\mathrm{spec}}$ lie in the $\R[\shiftop]$ (the set of operators with adjoints in $\R[\shiftop^*]$, i.e. those operators that only depend on powers of $\shiftop$), making them simpler to handle computationally, and efficient algorithms for computing $\opfont{L}_{\mathrm{spec}}$ are by now well established.

As we have seen in \cref{eq:L}, sparse spectral factors can be produced by \Cref{alg:chol}. However this is not the case in general. This is best illustrated by example, so consider
\begin{equation}\label{eq:M}
    \opfont{M}=\begin{bmatrix}
        -\opfont{1}&\opfont{0
        }&\opfont{0
        }&\opfont{0
        }\\
        \tfrac{1}{\sqrt{2}}\shiftop^*&-\opfont{1}&\opfont{0
        }&\opfont{0
        }\\
        \opfont{0
        }&\tfrac{1}{\sqrt{2}}\shiftop{}^*&\tfrac{1}{\sqrt{2}}\shiftop{}^*&\opfont{0
        }\\
        \opfont{0
        }&\opfont{0
        }&-\opfont{1}&\tfrac{1}{\sqrt{2}}\shiftop{}^*\\
        \opfont{0
        }&\opfont{0
        }&\opfont{0
        }&-\opfont{1
        }
    \end{bmatrix}.
\end{equation}
As shown in \cref{sec:tvspecfac}, this matrix is connected to a particular \gls{lqr} problem. Computing a factorisation with \Cref{alg:chol} gives $\opfont{P}=\opfont{I}$ and
\[
\opfont{L}=\scalebox{1}{$
    \setlength\arraycolsep{2pt}
\begin{bmatrix}
\opfont{\sqrt{2}}&\opfont{0}&\opfont{0}&\opfont{0}\\-\tfrac{1}{\sqrt{2}}\shiftop^*&\tfrac{\sqrt{3}+2\sqrt{2}}{\sqrt{2}}\opfont{1}-\sqrt{2}\shiftop^*\shiftop&\opfont{0}&\opfont{0}\\
    \opfont{0}&\tfrac{2+\sqrt{3}}{\sqrt{6}}\opfont{1}-\tfrac{1}{\sqrt{2}}\shiftop^*\shiftop&\tfrac{3+2\sqrt{2}}{\sqrt{6}}\opfont{1}-\tfrac{\sqrt{3}}{\sqrt{2}}\shiftop^*\shiftop&\opfont{0}\\
    \opfont{0}&\opfont{0}&-\tfrac{\sqrt{3}}{2}\shiftop{}&\tfrac{\sqrt{5}}{2}\opfont{1}
\end{bmatrix}$}.
\]
This factor is lower triangular and sparsity preserving; the two properties required for efficient back substitution computations. However since $\opfont{L}$ contains $\shiftop^*\shiftop$ terms, it is not a spectral factor. We now prove that in this case no sparsity preserving spectral factor exists, demonstrating that the algebraic framework of \cref{sec:chol,sec:comp} is necessary to find maximally sparse factorisations. Note that the proof is dependent on a technical result concerning the sparsity properties of triangular factorisations, that is presented as \Cref{lem:triangular} in the appendix.
\begin{theorem}\label{thm:specfac}
    If $\opfont{M}$ is as in \cref{eq:M},
    and $\opfont{P}$ is a permutation matrix, then there does not exist an invertible $\opfont{L}$ with entries in $\R[\shiftop]$ such that the following two properties hold:
    \begin{enumerate}
    \item $\opfont{LL}^* = \funof{\opfont{MP}}^*\opfont{MP}$;
        \item \opfont{L} is lower triangular, and satisfies
        \[      [\funof{\opfont{MP}}^*\opfont{MP}]_{ij}=\opfont{0}\implies{}\opfont{L}_{ij}=\opfont{0}.
        \]
    \end{enumerate}
\end{theorem}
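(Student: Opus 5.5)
Throughout, write $\opfont{N}=\opfont{M}^*\opfont{M}$, and let $c_1,\ldots,c_4$ denote the columns of $\opfont{M}$.

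\textbf{Step 1: the sparsity pattern of $\opfont{N}$.} I first compute $\opfont{N}$ directly from \cref{eq:M}. The column supports are: $c_1$ in rows 1--2, $c_2$ in rows 2--3, $c_3$ in rows 3--4, and $c_4$ in rows 4--5. Hence the sparsity graph of $\opfont{N}$ is a path $1-2-3-4$, with nonzero entries
\[
\opfont{N}_{12}=-\tfrac{1}{\sqrt2}\shiftop,\qquad
\opfont{N}_{23}=\tfrac12\shiftop\shiftop^*=\tfrac12\opfont{1},\qquad
\opfont{N}_{34}=-\tfrac{1}{\sqrt2}\shiftop^*,
\]
and diagonal entries $\opfont{N}_{11}=\tfrac32\opfont{1}$, $\opfont{N}_{22}=\tfrac32\opfont{1}$, $\opfont{N}_{33}=\opfont{1}+\tfrac12\opfont{1}$, $\opfont{N}_{44}=\tfrac32\opfont{1}$.

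\textbf{Step 2: reduce to a few orderings.} A fill-free lower triangular factor of $\opfont{P}^*\opfont{N}\opfont{P}$ corresponds to an elimination ordering of this path. I intend to use \Cref{lem:triangular} to reduce to the case where, at every step, the eliminated vertex is simplicial, i.e.\ an endpoint of the remaining path. Up to the reversal symmetry $1\leftrightarrow4$, $2\leftrightarrow3$, the possible orderings are then few: they are sequences of peeling endpoints. For each ordering, the factor is forced step by step, as follows.
\begin{itemize}
\item The first diagonal entry satisfies $\opfont{L}_{11}\opfont{L}_{11}^*=\opfont{N}_{ii}$ for the eliminated index $i$, with $\opfont{L}_{11}\in\R[\shiftop]$.
\item Scalar spectral factorisation in the commutative algebra $\R[\shiftop]$ forces $\opfont{L}_{11}=\pm\sqrt{3/2}\,\opfont{1}$, possibly times an inner factor; but an inner polynomial factor is a pure delay $\shiftop^k$, and invertibility rules out $k>0$.
\item The off-diagonal entry is then forced: $\opfont{L}_{21}=\opfont{N}_{ji}\,\opfont{L}_{11}^{-*}$.
\item The Schur complement on the remaining vertex becomes $\opfont{N}_{jj}-\opfont{L}_{21}\opfont{L}_{21}^*$.
\end{itemize}

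\textbf{Step 3: the obstruction.} This is the key step. The Schur complements must remain factorisable as $\ell\ell^*$ with $\ell\in\R[\shiftop]$, and the subsequent off-diagonal entries must be consistent. Starting from vertex 1, the first Schur complement on vertex 2 is
\[
\tfrac32-\tfrac12\cdot\tfrac23\,\shiftop\shiftop^*,
\]
which equals a constant multiple of $\opfont{1}$ since $\shiftop\shiftop^*=\opfont{1}$. Starting from vertex 4, the complement on vertex 3 instead involves $\shiftop^*\shiftop\neq\opfont{1}$. That operator is not of the form $\ell\ell^*$ with $\ell\in\R[\shiftop]$, because every such $\ell\ell^*$ is Toeplitz and $\opfont{1}-\shiftop^*\shiftop$ is not. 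This matches the $\shiftop^*\shiftop$ terms appearing in the Cholesky factor displayed before the theorem. Since every ordering must eventually eliminate vertex 3 adjacent to vertex 4, or eliminate vertex 4 while 3 remains, I would show in each case that some required product $\ell\ell^*$ must equal a non-Toeplitz operator, which is a contradiction.

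\textbf{Main obstacle.} I expect the main obstacle to be handling the orderings that eliminate from the middle and then have to clean up, and making the argument \emph{rigorous} that non-uniqueness of spectral factors (unitary constants, delays) cannot rescue things. For that, I would use the Toeplitz-versus-non-Toeplitz invariant: every product of elements of $\R[\shiftop]$ and their adjoints of the form $\opfont{A}\opfont{A}^*$ with $\opfont{A}$ in $\R[\shiftop]$ is Toeplitz, so comparing matrix entries in the $\shiftop^*\shiftop$ coefficient gives the contradiction.
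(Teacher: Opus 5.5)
\paragraph{The key Step 3 computation is wrong, and the error breaks the proof.} Your recursion is right: $\opfont{L}_{21}=\opfont{N}_{21}\funof{\opfont{L}_{11}^*}^{-1}$. It gives $\opfont{L}_{21}\opfont{L}_{21}^*=\opfont{N}_{21}\funof{\opfont{L}_{11}\opfont{L}_{11}^*}^{-1}\opfont{N}_{12}=\tfrac23\opfont{N}_{21}\opfont{N}_{12}$. Since $\opfont{N}_{21}=\opfont{N}_{12}^*=-\tfrac{1}{\sqrt2}\shiftop^*$, the complement on vertex 2 after eliminating vertex 1 is $\tfrac32\opfont{1}-\tfrac13\shiftop^*\shiftop$, not $\tfrac32\opfont{1}-\tfrac13\shiftop\shiftop^*$; you multiplied in the wrong order.

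Two sanity checks confirm this. First, the reversal $1\leftrightarrow4$, $2\leftrightarrow3$ that you invoke in Step 2 maps $\opfont{N}$ exactly onto itself, so the two ends of the path cannot behave differently. Second, the factor displayed just before the theorem is computed with $\opfont{P}=\opfont{I}$, i.e.\ starting from vertex 1, and its $\shiftop^*\shiftop$ term sits exactly in the $(2,2)$ entry.

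With your numbers the argument actually fails on the identity ordering. The complements would be $\tfrac76$ at vertex 2, $\tfrac32-\tfrac67\cdot\tfrac14=\tfrac97$ at vertex 3, and $\tfrac32-\tfrac79\cdot\tfrac12\shiftop\shiftop^*=\tfrac{10}{9}$ at vertex 4. All are constants, so you would have constructed a sparsity-preserving spectral factor. In particular, the branch of your dichotomy where vertex 3 is eliminated while 4 remains gives no obstruction, because $\opfont{N}_{43}\opfont{N}_{34}=\tfrac12\shiftop\shiftop^*=\tfrac12\opfont{1}$.

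Separately, Step 2 only announces a reduction to simplicial orderings via \Cref{lem:triangular}. It never says which reference factor is used or why that factor has fill.

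\paragraph{Both points can be repaired inside your framework.} The repaired argument is shorter than the paper's. The paper lets $\shiftop\to\opfont{1}$ via the test sequences $1^{(n)}$ to rule out 16 of the 24 orderings by a real fill-in check. It then compares with the \Cref{alg:chol} factor through \Cref{lem:triangular} for the remaining 8.

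First, justify that each $\opfont{L}_{ii}$ is invertible. For entries in $\R[\shiftop]$ this follows from the determinant of the triangular polynomial symbol of $\opfont{L}^*$; for general invertible triangular operator matrices it is false. You need it both for $\opfont{L}_{11}=\pm\sqrt{3/2}\,\opfont{1}$ and for solving for the off-diagonal entries.

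Next, since $\opfont{L}$ is at least as sparse as $\opfont{N}$ and the path has no triangles, $(\opfont{L}\opfont{L}^*)_{ij}$ only involves indices $k$ equal or adjacent to both $i$ and $j$. There are then two cases.
\begin{itemize}
\item Suppose some vertex $v$ is eliminated before both its neighbours $a$ (earlier) and $b$. Then $\opfont{0}=\opfont{N}_{ba}=\opfont{L}_{bv}\opfont{L}_{av}^*=\opfont{N}_{bv}\funof{\opfont{L}_{vv}\opfont{L}_{vv}^*}^{-1}\opfont{N}_{va}$. This is impossible: one outer factor is $\tfrac12\opfont{1}$, the other a nonzero multiple of $\shiftop$ or $\shiftop^*$, and neither $\shiftop X$ nor $X\shiftop^*$ vanishes for invertible $X$.
\item Otherwise, the first of vertices 2, 3 to be eliminated has only its leaf neighbour eliminated before it. Its diagonal entry must then satisfy $\ell\ell^*=\tfrac32\opfont{1}-\tfrac13\shiftop^*\shiftop$, the same from either end since $\opfont{N}_{34}\opfont{N}_{43}=\tfrac12\shiftop^*\shiftop$ as well. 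This operator multiplies sequences by $\funof{\tfrac32,\tfrac76,\tfrac76,\ldots}$, so it is not Toeplitz, whereas $\ell\ell^*$ is Toeplitz for every $\ell\in\R[\shiftop]$.
\end{itemize}
This disposes of all eight leaf-peeling orders at once, without tracking any later Schur complements.
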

\begin{proof}
    The proof will be by contradiction, and so we suppose that an $\opfont{L}$ meeting 1)--2) exists. We will first show that it is only possible to fulfil property 2) for particular permutation matrices, which will yield a contradiction in all but a few special cases. It will then turn out that in these cases, \Cref{alg:chol} produces factors with $\shiftop^*\shiftop$ terms on the diagonal. This will be sufficient to derive a contradiction for all the remaining cases.

    Write $\opfont{L}$ on the form
    \[
    \opfont{L}=\sum_{k}^\infty{}L^{(k)}\shiftop^k,
    \]
    where $L^{(k)}\in\R^{4\times{}4}$. Observe that $L^{(k)}_{ij}\neq{}0$ only if $\opfont{L}_{ij}\neq{}\opfont{0}$, that is the matrices $L^{(k)}$ are at least as sparse as $\opfont{L}$. Let $1^{\funof{n}}\in\ltwo$ be the sequence 
    \[
    1^{\funof{n}}[k]=\begin{cases}
        \frac{1}{\sqrt{n}}&\text{if $k<n$,}\\
        0&\text{otherwise.}
    \end{cases}
    \]
    Since $\opfont{L}$ is bounded, it follows that for any vector $v\in\mathbb{R}^4$,
    \[
    \lim_{n\rightarrow{}\infty}\norm{\opfont{L}^*v1^{(n)}}^2=
    \lim_{n\rightarrow{}\infty}
    \frac{1}{n}\sum_{l=0}^{n-1}\norm{\funof{\sum_{k=0}^{l}L^{(k)}}v}^2<\infty{}.
    \]
    This is sufficient to conclude that
    \[
    L=\sum_{k=0}^\infty{}L^{(k)}
    \]
    exists, and is also at least as sparse as $\opfont{L}$. It also follows that for any vectors $v,w\in\R^{4}$,
    \[
    \lim_{n\rightarrow{}\infty}\inner{w1^{\funof{n}},\opfont{L}\opfont{L}^*v1^{\funof{n}}}=w^\top{}LL^\top{}v.
    \]
    It can similarly be shown that for any vectors $v,w\in\R^{4}$,
        \[
    \lim_{n\rightarrow{}\infty}\inner{\opfont{MP}w1^{\funof{n}},\opfont{MP}v1^{\funof{n}}}=w^\top{}P^\top{}NPv.
    \]
    where $P$ is a permutation matrix and 
    \[
    N=\begin{bmatrix}
        \tfrac{3}{2}&\tfrac{-1}{\sqrt{2}}&0&0\\
        \tfrac{-1}{\sqrt{2}}&\tfrac{3}{2}&\tfrac{1}{2}&0\\
        0&\tfrac{1}{2}&\tfrac{3}{2}&\tfrac{-1}{\sqrt{2}}\\
        0&0&\tfrac{-1}{\sqrt{2}}&\tfrac{3}{2}
    \end{bmatrix}.
    \]
    Hence
    \[
    P^\top{}NP=LL^\top.
    \]
    Now there are 24 possible permutations. It can then be checked that for all but 8 or these, the Cholesky factor of $P^\top{}NP$ is not as sparse as $\opfont{L}$. However it follows from \Cref{lem:triangular} that the Cholesky factor must have the same sparsity as $L$, which is at least as sparse as $\opfont{L}$. This yields a contradiction for all but these 8 permutations. 
    
    Now fix $\opfont{P}$ to be any one of these 8 permutations. It can then be checked that \Cref{alg:chol} runs on $\opfont{MP}$, and let $\opfont{L}_1$ be the resulting factor. Since
    \[
    \opfont{L}_1^{-1}\opfont{L}\funof{\opfont{L}_1^{-1}\opfont{L}}^*=\opfont{I}
    \]
    it follows from \Cref{lem:triangular} that
    \[
    \mathrm{diag}\,\funof{\opfont{L}}\mathrm{diag}\,\funof{\opfont{L}}^*=\mathrm{diag}\,\funof{\opfont{L}_1}\mathrm{diag}\,\funof{\opfont{L}_1}^*.
    \]
    However this again leads to a contradiction, since it can be checked in each case that the right-hand-side of the above contains terms involving $\shiftop{}^*\shiftop{}$, yet the entries of $\opfont{L}$ are assumed in $\R[\shiftop]$ so the left-hand-side cannot.
\end{proof}

\section{Application in control}\label{sec:control}
In this section we will discuss how to apply the factorisation from \cref{sec:results} to give sparsity exploiting controller implementations for some infinite horizon \gls{lqr} problems.
The examples we will consider are simple transportation control problems without penalty on the control sequence.
The simplified setting we consider will allow us to find explicit factorisations of the optimal control on the form
    \begin{equation}\label{eq:control-law}
    K_1 u\sqfunof{k} = K_2 x\sqfunof{k},   
\end{equation}
where $K_1$ and $K_2$ are very sparse and $K_1^{-1}K_2$ is the classic \gls{lqr} state-feedback. 

The hope is to clearly and transparently illustrate many of the interesting aspects of \Cref{thm:1} from the control perspective. The examples we present can certainly be significantly generalised, and the techniques applied in range of related areas (such as filtering and estimation). The rest of the section is structured as follows. In \cref{sec:simple-lqr} we start with a highly simplified example to illustrate the key ideas. This example is inspired by \cite{HPR18}, in which a structured
controller for this problem was found. In \cref{sec:more-general-lqr} we then discuss generalisations where we allow for more complex transportation network topologies.

\begin{remark}
    There are at least three ways to leverage \cref{eq:control-law} to efficiently compute $u[k]$ from the measurements $x[k]$. The first and most closely related to our earlier discussions is to use back substitution (c.f. \Cref{rem:backsubR}). As we will see, the sparsity patterns of $K_1$ and $K_2$ reflect the underlying transportation network topology, allowing for an efficient and often localised implementation based on message passing. Another alternative is to view the computation of $u[k]$ based on $K_1$, $K_2$ and $x[k]$ as a distributed optimisation problem. Again this could be given an efficient implementation based on local communication. Finally the sparsity of $K_1$ and $K_2$ could be used to design market incentives through local shadow prices. These could be used to prompt each storage facility to transport the optimal $u[k]$ based only on local measurements of $x[k]$, and a locally determined market price \cite{HPR20}.
\end{remark}

\subsection{A simple LQR example}\label{sec:simple-lqr} 

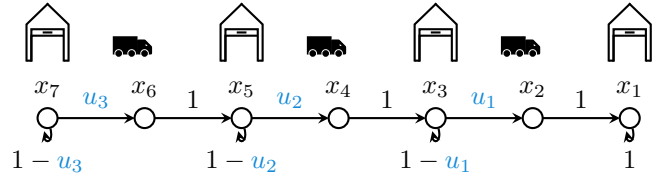
\begin{figure}[t]
    \centering
    \centering 
    \begin{tikzpicture}[scale=.43,>=stealth]

\coordinate (x1) at (-3, 0);
\draw[thick] ($(x1)+(-0.5,-0.25)$) -- +(0,1) -- +(1,1) -- +(1,0);
\draw[thick] ($(x1)+(-0.65,-0.25)$) -- +(0,1.15) -- +(+0.672,1.8);
\draw[thick] ($(x1)+(+0.65,-0.25)$) -- +(0,1.15) -- +(-0.672,1.8);
\draw[thick] ($(x1)+(-0.5,-0.2225)$) -- ($(x1)+(-0.65,-0.2225)$);
\draw[thick] ($(x1)+(+0.5,-0.2225)$) -- ($(x1)+(+0.65,-0.2225)$);
\draw[thick] ($(x1)+(-0.5,-0.25)$) -- +(0,0.8) -- +(1,0.8);
\draw[thick] ($(x1)+(-0.15,0.65)$) -- ($(x1)+(0.15,0.65)$);

\coordinate (x3) at (-9, 0);
\draw[thick] ($(x3)+(-0.5,-0.25)$) -- +(0,1) -- +(1,1) -- +(1,0);
\draw[thick] ($(x3)+(-0.65,-0.25)$) -- +(0,1.15) -- +(+0.672,1.8);
\draw[thick] ($(x3)+(+0.65,-0.25)$) -- +(0,1.15) -- +(-0.672,1.8);
\draw[thick] ($(x3)+(-0.5,-0.2225)$) -- ($(x3)+(-0.65,-0.2225)$);
\draw[thick] ($(x3)+(+0.5,-0.2225)$) -- ($(x3)+(+0.65,-0.2225)$);
\draw[thick] ($(x3)+(-0.5,-0.25)$) -- +(0,0.8) -- +(1,0.8);
\draw[thick] ($(x3)+(-0.15,0.65)$) -- ($(x3)+(0.15,0.65)$);

\coordinate (x5) at (-15, 0);
\draw[thick] ($(x5)+(-0.5,-0.25)$) -- +(0,1) -- +(1,1) -- +(1,0);
\draw[thick] ($(x5)+(-0.65,-0.25)$) -- +(0,1.15) -- +(+0.672,1.8);
\draw[thick] ($(x5)+(+0.65,-0.25)$) -- +(0,1.15) -- +(-0.672,1.8);
\draw[thick] ($(x5)+(-0.5,-0.2225)$) -- ($(x5)+(-0.65,-0.2225)$);
\draw[thick] ($(x5)+(+0.5,-0.2225)$) -- ($(x5)+(+0.65,-0.2225)$);
\draw[thick] ($(x5)+(-0.5,-0.25)$) -- +(0,0.8) -- +(1,0.8);
\draw[thick] ($(x5)+(-0.15,0.65)$) -- ($(x5)+(0.15,0.65)$);

\coordinate (x7) at (-21, 0);
\draw[thick] ($(x7)+(-0.5,-0.25)$) -- +(0,1) -- +(1,1) -- +(1,0);
\draw[thick] ($(x7)+(-0.65,-0.25)$) -- +(0,1.15) -- +(+0.672,1.8);
\draw[thick] ($(x7)+(+0.65,-0.25)$) -- +(0,1.15) -- +(-0.672,1.8);
\draw[thick] ($(x7)+(-0.5,-0.2225)$) -- ($(x7)+(-0.65,-0.2225)$);
\draw[thick] ($(x7)+(+0.5,-0.2225)$) -- ($(x7)+(+0.65,-0.2225)$);
\draw[thick] ($(x7)+(-0.5,-0.25)$) -- +(0,0.8) -- +(1,0.8);
\draw[thick] ($(x7)+(-0.15,0.65)$) -- ($(x7)+(0.15,0.65)$);

\coordinate (x9) at (-27, 0);

\coordinate (x2) at (-6,0);
\draw[fill=black, rounded corners=0.2]
(x2) -- ($ (x2)+(0.2,0) $)
-- ($ (x2)+(0.2,0.2) $)
-- ($ (x2)+(0,0.4) $)
-- ($ (x2)+(-0.2,0.4) $)
-- ($ (x2)+(-0.2,0) $) -- cycle;
\draw[fill=white, rounded corners=0.2]
($ (x2)+(0,0.2) $) --
($ (x2)+(0.175,0.2) $) --
($ (x2)+(0,0.375) $) -- cycle;
\draw[fill=black]
($ (x2)+(-0.25,0) $) --
($ (x2)+(-0.25,0.5) $) --
($ (x2)+(-0.95,0.5) $) --
($ (x2)+(-0.95,0) $) -- cycle;
\draw[white,fill=white] ($ (x2)+(0,0) $) circle (0.1cm);
\draw[fill=black] ($ (x2)+(0,0) $) circle (0.07cm);
\draw[white,fill=white] ($ (x2)+(-0.4,0) $) circle (0.1cm);
\draw[fill=black] ($ (x2)+(-0.4,0) $) circle (0.07cm);
\draw[white,fill=white] ($ (x2)+(-0.8,0) $) circle (0.1cm);
\draw[fill=black] ($ (x2)+(-0.8,0) $) circle (0.07cm);

\coordinate (x4) at (-12,0);
\draw[fill=black, rounded corners=0.2]
(x4) -- ($ (x4)+(0.2,0) $)
-- ($ (x4)+(0.2,0.2) $)
-- ($ (x4)+(0,0.4) $)
-- ($ (x4)+(-0.2,0.4) $)
-- ($ (x4)+(-0.2,0) $) -- cycle;
\draw[fill=white, rounded corners=0.2]
($ (x4)+(0,0.2) $) --
($ (x4)+(0.175,0.2) $) --
($ (x4)+(0,0.375) $) -- cycle;
\draw[fill=black]
($ (x4)+(-0.25,0) $) --
($ (x4)+(-0.25,0.5) $) --
($ (x4)+(-0.95,0.5) $) --
($ (x4)+(-0.95,0) $) -- cycle;
\draw[white,fill=white] ($ (x4)+(0,0) $) circle (0.1cm);
\draw[fill=black] ($ (x4)+(0,0) $) circle (0.07cm);
\draw[white,fill=white] ($ (x4)+(-0.4,0) $) circle (0.1cm);
\draw[fill=black] ($ (x4)+(-0.4,0) $) circle (0.07cm);
\draw[white,fill=white] ($ (x4)+(-0.8,0) $) circle (0.1cm);
\draw[fill=black] ($ (x4)+(-0.8,0) $) circle (0.07cm);

\coordinate (x6) at (-18,0);
\draw[fill=black, rounded corners=0.2]
(x6) -- ($ (x6)+(0.2,0) $)
-- ($ (x6)+(0.2,0.2) $)
-- ($ (x6)+(0,0.4) $)
-- ($ (x6)+(-0.2,0.4) $)
-- ($ (x6)+(-0.2,0) $) -- cycle;
\draw[fill=white, rounded corners=0.2]
($ (x6)+(0,0.2) $) --
($ (x6)+(0.175,0.2) $) --
($ (x6)+(0,0.375) $) -- cycle;
\draw[fill=black]
($ (x6)+(-0.25,0) $) --
($ (x6)+(-0.25,0.5) $) --
($ (x6)+(-0.95,0.5) $) --
($ (x6)+(-0.95,0) $) -- cycle;
\draw[white,fill=white] ($ (x6)+(0,0) $) circle (0.1cm);
\draw[fill=black] ($ (x6)+(0,0) $) circle (0.07cm);
\draw[white,fill=white] ($ (x6)+(-0.4,0) $) circle (0.1cm);
\draw[fill=black] ($ (x6)+(-0.4,0) $) circle (0.07cm);
\draw[white,fill=white] ($ (x6)+(-0.8,0) $) circle (0.1cm);
\draw[fill=black] ($ (x6)+(-0.8,0) $) circle (0.07cm);

\coordinate (x8) at (-24,0);

\coordinate (x1_below) at ($(x1)+(0,-2)$);
\coordinate (x2_below) at ($(x2)+(0,-2)$);
\coordinate (x3_below) at ($(x3)+(0,-2)$);
\coordinate (x4_below) at ($(x4)+(0,-2)$);
\coordinate (x5_below) at ($(x5)+(0,-2)$);
\coordinate (x6_below) at ($(x6)+(0,-2)$);
\coordinate (x7_below) at ($(x7)+(0,-2)$);
\coordinate (x8_below) at ($(x8)+(0,-2)$);
\coordinate (x9_below) at ($(x9)+(0,-2)$);

\draw[->, thick] ($(x7_below)+(0.3,0)$) -- node[midway, mblue, above]
{$u_3$} ($(x6_below)+(-0.3,0)$);
\draw[->, thick] ($(x6_below)+(0.3,0)$) -- node[midway, black, above]
{$1$} ($(x5_below)+(-0.3,0)$);
\draw[->, thick] ($(x5_below)+(0.3,0)$) -- node[midway, mblue, above]
{$u_2$} ($(x4_below)+(-0.3,0)$);
\draw[->, thick] ($(x4_below)+(0.3,0)$) -- node[midway, black, above]
{$1$} ($(x3_below)+(-0.3,0)$);
\draw[->, thick] ($(x3_below)+(0.3,0)$) -- node[midway, mblue, above]
{$u_1$} ($(x2_below)+(-0.3,0)$);
\draw[->, thick] ($(x2_below)+(0.3,0)$) -- node[midway, black, above]
{$1$} ($(x1_below)+(-0.3,0)$);

\node[draw, thick, fill=white, circle, scale=0.8, label =
$x_1$, name=c1] at (x1_below) {};
\node[draw, thick, fill=white, circle, scale=0.8, label =
$x_2$, name=c2] at (x2_below) {};
\node[draw, thick, fill=white, circle, scale=0.8, label =
$x_3$, name=c3] at (x3_below) {};
\node[draw, thick, fill=white, circle, scale=0.8, label =
$x_4$, name=c4] at (x4_below) {};
\node[draw, thick, fill=white, circle, scale=0.8, label =
$x_5$, name=c5] at (x5_below) {};
\node[draw, thick, fill=white, circle, scale=0.8, label =
$x_6$, name=c6] at (x6_below) {};
\node[draw, thick, fill=white, circle, scale=0.8, label =
$x_7$, name=c7] at (x7_below) {};

\path[->, thick] (c1) edge [out=-45, in=-135, loop below] node[midway,
below] {$1$} (c1);
\path[->, thick] (c3) edge [out=-45, in=-135, loop below] node[midway,
below] {$1-\textcolor{mblue}{u_1}$} (c3);
\path[->, thick] (c5) edge [out=-45, in=-135, loop below] node[midway,
below] {$1-\textcolor{mblue}{u_2}$} (c5);
\path[->, thick] (c7) edge [out=-45, in=-135, loop below] node[midway,
below] {$1-\textcolor{mblue}{u_3}$} (c7);

\end{tikzpicture}
    \caption{Model of the transportation network studied in \cref{sec:simple-lqr}.}
    \label{fig:state-graph}
\end{figure}

\subsubsection{Problem setup}\label{sec:lqrsetup} Consider the transportation network illustrated in \Cref{fig:state-graph}, associated with the dynamics
\[
\setlength\arraycolsep{3pt}
\begin{aligned}
x[k+1] &= \underbrace{\begin{bmatrix}
    1&1&0&0&0&0&0\\
    0&0&0&0&0&0&0\\
    0&0&1&1&0&0&0\\
    0&0&0&0&0&0&0\\
    0&0&0&0&1&1&0\\
    0&0&0&0&0&0&0\\
    0&0&0&0&0&0&1\\
\end{bmatrix}}_{\eqqcolon{}A}x[k]+
\underbrace{\begin{bmatrix}
0&0&0\\
1&0&0\\
-1&0&0\\
0&1&0\\
0&-1&0\\
0&0&1\\
0&0&-1\\
\end{bmatrix}}_{\eqqcolon{}B}u[k]\\
y[k]&=\underbrace{\begin{bmatrix}
    1&0&0&0&0&0&0\\
    0&0&1&0&0&0&0\\
    0&0&0&0&1&0&0\\
    0&0&0&0&0&0&1\\
\end{bmatrix}}_{\eqqcolon{}C}x[k].
\end{aligned}
\]
These dynamics model a situation where some quantity (e.g. water, a commodity, etc.) can be transported between a set of storage facilities through a transportation network. The elements of the state vector $x[k]$ with odd indices then specify the amount of the quantity at a given storage facility. In addition each facility has a control sequence that determines how much of the quantity is taken out of the facility, and sent to its neighbour, where it arrives one time step later.

To arrive at an \gls{lqr} problem, suppose that the objective is to optimise transportation about a given operating point. That is, there is some equilibrium flow throughout the network (and all variables are assumed to be defined relative to this equilibrium), and the objective is to adjust the flows to balance the requirements at the individual storage facilities. A natural cost function is
\begin{equation}
    J\funof{x} = \sum_{k=0}^{\infty} r^{2k}
    \funof{x_1\sqfunof{k}^2+x_3\sqfunof{k}^2 + x_5\sqfunof{k}^2 + x_7 \sqfunof{k}^2},
\end{equation}
where $r \in (0,1)$ is a discount factor (since $\sum_ix_i[k]=\sum_ix_i[0]$ for every $k$, a discount factor is required to keep the cost finite for the majority of initial conditions). The optimal inputs can then be computed by solving the \gls{lqr} problem
\begin{equation}
    \begin{aligned}\label{eq:simple-lqr}
\min_{u\sqfunof{0},u\sqfunof{1},\ldots{}}\sum_{k=0}^\infty{}&
        \bar{y}\sqfunof{k}^\top \bar{y}\sqfunof{k}  \\
\text{s.t.}\quad{}
        \bar{x}\sqfunof{k+1}   &= r A \bar{x}\sqfunof{k}  + B u\sqfunof{k},\,\bar{x}[0]=x_0   \\
        \bar{y} &= C \bar{x}\sqfunof{k}. \\
\end{aligned}
\end{equation}
Note that the state $\bar{x}[k+1]$ in the above is related to the original transportation network according to $\bar{x}[k]=r^kx[k]$.

\subsubsection{Least squares reformulation} \label{sec:lqrleastsquares}

The dynamics in \cref{eq:simple-lqr} can be rewritten in terms of shift operators according to
\begin{equation}\label{eq:shift-dynamics}
    \funof{\opfont{1} - r\shiftop^*} \begin{bmatrix} \bar{x}_1  \\ \bar{x}_3 \\
    \bar{x}_5 \\ \bar{x}_7 \end{bmatrix} = \shiftop^* \underbrace{
        \begin{bmatrix} r \shiftop^*  & \opfont{0} & \opfont{0}  \\
        -\opfont{1} & r \shiftop^* & \opfont{0}  \\
        \opfont{0} & -\opfont{1} & r \shiftop^*   \\
        \opfont{0} & \opfont{0} & -\opfont{1}  \end{bmatrix}}_{\opfont{M}}
        \begin{bmatrix} u_1 \\ u_2 \\ u_3 \end{bmatrix} 
            + d,
\end{equation}
where
\begin{equation}\label{eq:init-condition}
    d = \funof{C x[0], C r A x[0],0,0,\ldots{}}.
\end{equation}
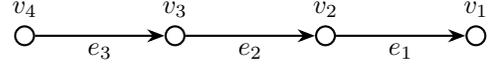
\begin{figure}[t]
        \centering
        \begin{tikzpicture}[
        every node/.append style={circle,draw=black, text=black,inner sep=0pt, minimum size=2.5mm}
    ]

    \pgfmathsetmacro{\length}{2}

    \node [label={$v_{1}$}]at (3*\length,0)(v1) { };
    \node [label={$v_{2}$}]at (2*\length,0)(v2) { };
    \node [label={$v_{3}$}]at (\length,0)(v3) { };
    \node [label={$v_{4}$}]at (0,0)(v4) { };
    \path (v2) edge[-Stealth] node[below, draw=none] {$e_{1}$} (v1);
    \path (v3) edge[-Stealth] node[below, draw=none] {$e_{2}$} (v2);
    \path (v4) edge[-Stealth] node[below, draw=none] {$e_{3}$} (v3);

\end{tikzpicture}
        \caption{Graph representing the transportation network in \Cref{fig:state-graph}. There is one vertex per storage facility, and one edge per transportation link. The edges have been oriented to reflect the transportation direction. The undirected version of this graph (i.e. \Cref{fig:example-graph1}) describes the sparsity pattern of the $\opfont{M}$ appearing in \cref{sec:lqrleastsquares} according to \Cref{def:mon}.}
        \label{fig:example-graph}
\end{figure}
The sparsity pattern of $\opfont{M}$ is captured by the graph in \Cref{fig:example-graph}, and this is the graph that is relevant when applying \Cref{thm:1}. This graph also captures the essential topological features of the transmission network, with one vertex per storage facility, and one edge per one step delay. The sequence $d$  captures the effect of the initial condition. In line with \Cref{rem:extend}, we conceal the storage dynamics in \cref{eq:shift-dynamics} with the following variable transforms
\begin{equation}\label{eq:variable-transform}
    \begin{cases}
        \funof{\opfont{1} - r \shiftop^*} z &= u \\
        \funof{\opfont{1} - r \shiftop^*} y_{\text{init}} &= d.
    \end{cases}
\end{equation}
Substituting these constraints directly into the cost function shows that the least squares problem
\begin{equation}\label{eq:ls-problem}
    \min_{z\in\ltwo}\sum_{k=0}^\infty{} 
    \| \shiftop^* \opfont{M} z + y_{\text{init}}\|^2.
\end{equation}
is equivalent to the \gls{lqr} problem in \cref{eq:simple-lqr}.

\subsubsection{Solution by back substitution}\label{sec:lqrbacksub}

The least squares problem in \cref{eq:ls-problem} is solved by any $z\in\ltwo^n$ that satisfies
\begin{equation}\label{eq:ls-solution}
    \opfont{M}^*\opfont{M} z = - \shiftop \opfont{M}^* y_{\text{init}}.
\end{equation}
An optimal $z$ can be found in a sparsity exploiting manner by making use of the tools from \cref{sec:results}. Applying \Cref{alg:chol} gives $\opfont{P}=\opfont{I}$ and an $\opfont{L}$ such that $\opfont{M}^*\opfont{M}=\opfont{L}^*\opfont{L}$. As an example, with $r = 1/\sqrt{2}$,
\[\label{eq:Lexample}
\opfont{L}=\begin{bmatrix}
    \sqrt{\frac{3}{2}} \opfont{1} & \opfont{0} & \opfont{0} \\
    -\frac{1}{\sqrt{3}} \shiftop & \sqrt{\frac{7}{6}}\opfont{1} & \opfont{0} \\
    \opfont{0} & -\sqrt{\frac{3}{7}} \shiftop & \sqrt{\frac{15}{14}}\opfont{1}
\end{bmatrix}.
\]
An optimal $z$ can then be found by sequentially solving the equations
\[
\opfont{L}v=w,\,\opfont{L}^*z=v
\]
through back substitution as described in \cref{sec:backsub}, where $w= - \shiftop \opfont{M}^* y_{\text{init}}$. 

To proceed we must first represent $w$ as a triple. Writing the operator matrix $\opfont{M}^*$ as $\opfont{M}^*=\funof{M^{\text{(0)}}}^\top + \funof{M^{\text{(1)}}}^\top \shiftop$, we see that $w$ is the
response of the system 
\begin{equation}
\begin{aligned}
        \tilde{x}\sqfunof{k+1}   &= \begin{bmatrix} rA & 0 \\ CrA & 0 \end{bmatrix}
        \tilde{x}\sqfunof{k}, \;\tilde{x}\sqfunof{0} =  \begin{bmatrix}
\bar{x}\sqfunof{0} \\ 0  \end{bmatrix}^\top  \\ \\
        w\sqfunof{k}   &= \begin{bmatrix} \funof{M^{\text{(1)}}}^\top CrA  & \funof{M^{\text{(0)}}}^\top  \end{bmatrix}  \tilde{x}\sqfunof{k}  
\end{aligned}
\end{equation}
This is already on a suitable form, however since $A= A^2$ this representation can be further simplified, and we find that
\begin{equation}\label{eq:w}
    w\sqfunof{k} = r^k \funof{\funof{M^{\text{(0)}}}^\top + \funof{M^{\text{(1)}}}^\top r  } C r A x\sqfunof{0}. 
\end{equation}
That is the sequence $w$ is represented by the triple 
\[
\funof{\funof{\funof{M^{\text{(0)}}}^\top + \funof{M^{\text{(1)}}}^\top r  } C r A, rI, x[0]}.
\]
Now that the sequence is known, back substitution can be performed in a sparsity exploiting manner to obtain $z$ from which $u$ can be found through \cref{eq:variable-transform}. 

\subsubsection{Structured control laws} \label{sec:lqrstructurecontrol}

The above process shows that the entire optimal input sequence $u\in\ltwo^n$ can be computed in way that exploits sparsity in the underlying transportation network. This is rather more than is needed in practice, where it is natural to apply inputs in a receding horizon manner (and for the \gls{lqr} problem the optimal controls are given by a static state-feedback). That is, it is enough to characterise the map
\[
K:x[0]\mapsto{}u[0].
\]
From \cref{eq:Lexample,eq:w} we see that both $\opfont{L}$ and $w$ have the necessary structure to apply \Cref{thm:control-law}. Furthermore since $z\sqfunof{0} = u\sqfunof{0}$ we can arrive at the control law directly without having to invert \cref{eq:variable-transform}. It then follows that when $r = 1/\sqrt{2}$ we get a factorisation of the control law on the form in \cref{eq:control-law} with
\begin{equation}
	K_1 = \begin{bmatrix}
    \frac{3}{2} & 0 & 0\\
    -\frac{1}{2} & \frac{7}{6} & 0 \\
    0 & - \frac{1}{2} & \frac{15}{14}
    \end{bmatrix}
\end{equation}
and
\begin{equation}
	K_2 = \frac{1}{2\sqrt{2}}\begin{bmatrix}
    1 & 1 & -2 & -2 & 0 & 0 & 0 \\
    0 & 0 & 1 & 1 & -2 & -2 & 0 \\
    0 & 0 & 0 & 0 & 1 & 1 & -2
    \end{bmatrix}.
\end{equation}
The structure
of $K_1$ and $K_2$ shows that in order to compute the $i$-th input, only the
state-variables in the neighbouring vertices and knowledge
of the control sequence along one of the neighbouring edges is required.

\begin{figure*}
    \hspace{-1.5cm}
    \begin{tikzpicture}[
		every node/.append style={circle,draw=black, text=black,inner sep=0pt, minimum size=2.5mm, scale=0.95}
	]

    \pgfmathsetmacro{\distGraph}{8.5}  
    \pgfmathsetmacro{\distEq}{5}  
    \pgfmathsetmacro{\EqCenter}{1.5}  

	\node [label={$y_{1}$}]at (-5.815785e+00,4.867149e+00)(y1) { };
	\node [label={$y_{2}$}]at (-4.190486e+00,2.018516e+00)(y2) { };
	\node [label={$y_{3}$}]at (-1.911586e+00,3.629772e+00)(y3) { };
	\node [label={$y_{4}$}]at (-4.496418e+00,3.704133e+00)(y4) { };
	\node [label={$y_{5}$}]at (-3.099428e+00,3.675670e+00)(y5) { };
	\node [label=below:{$y_{6}$}]at (1.130498e+00,-4.352838e+00)(y6) { };
	\node [label=below:{$y_{7}$}]at (-1.126788e-01,-4.696911e+00)(y7) { };
	\node [label={$y_{8}$}]at (-5.388307e+00,-4.898947e+00)(y8) { };
	\node [label={$y_{9}$}]at (-4.147545e+00,-3.747024e+00)(y9) { };
	\node [label=left:{$y_{10}$}]at (-1.729980e+00,-6.679899e-01)(y10) { };
	\node [label=right:{$y_{11}$}]at (-2.616373e+00,2.140913e+00)(y11) { };
	\node [label=right:{$y_{12}$}]at (1.161214e-01,-2.984829e+00)(y12) { };
	\node [label={$y_{13}$}]at (-2.499658e+00,-2.249805e+00)(y13) { };
	\node [label={$y_{14}$}]at (2.639575e-01,5.359162e-01)(y14) { };
	\node [label={$y_{15}$}]at (8.915595e+00,2.103629e+00)(y15) { };
	\node [label={$y_{16}$}]at (7.374628e+00,1.518869e+00)(y16) { };
	\node [label=below:{$y_{17}$}]at (6.835995e+00,2.208639e-02)(y17) { };
	\node [label={$y_{18}$}]at (-5.442719e-01,-4.833373e-01)(y18) { };
	\node [label=below:{$y_{19}$}]at (3.876326e+00,-6.793099e-01)(y19) { };
	\node [label={$y_{20}$}]at (5.395267e+00,5.908330e-01)(y20) { };
	\node [label={$y_{21}$}]at (2.644130e+00,-4.649480e-02)(y21) { };
	\path (y4) edge[-Stealth] node[auto, draw=none] {$u_{1}$} (y1);
	\path (y11) edge[-Stealth] node[auto, draw=none] {$u_{2}$} (y2);
	\path (y11) edge[-Stealth] node[auto, draw=none] {$u_{3}$} (y3);
	\path (y11) edge[-Stealth] node[auto, draw=none] {$u_{4}$} (y4);
	\path (y11) edge[-Stealth] node[auto, draw=none] {$u_{5}$} (y5);
	\path (y12) edge[-Stealth] node[auto, draw=none] {$u_{6}$} (y6);
	\path (y12) edge[-Stealth] node[left, draw=none] {$u_{7}$} (y7);
	\path (y9) edge[-Stealth] node[auto, draw=none] {$u_{8}$} (y8);
	\path (y13) edge[-Stealth] node[auto, draw=none] {$u_{9}$} (y9);
	\path (y18) edge[-Stealth] node[above, draw=none] {$u_{10}$} (y10); 
	\path (y18) edge[-Stealth] node[auto, draw=none] {$u_{11}$} (y11);
	\path (y18) edge[-Stealth] node[auto, draw=none] {$u_{12}$} (y12);
	\path (y18) edge[-Stealth] node[auto, draw=none] {$u_{13}$} (y13);
	\path (y18) edge[-Stealth] node[auto, draw=none] {$u_{14}$} (y14);
	\path (y16) edge[-Stealth] node[auto, draw=none] {$u_{15}$} (y15);
	\path (y20) edge[-Stealth] node[auto, draw=none] {$u_{16}$} (y16);
	\path (y20) edge[-Stealth] node[auto, draw=none] {$u_{17}$} (y17);
	\path (y21) edge[-Stealth] node[auto, draw=none] {$u_{18}$} (y18);
	\path (y21) edge[-Stealth] node[auto, draw=none] {$u_{19}$} (y19);
	\path (y21) edge[-Stealth] node[auto, draw=none] {$u_{20}$} (y20);

    \tiny
    \setcounter{MaxMatrixCols}{50}
    \scalebox{0.9}{
        \node [draw=none] at (\EqCenter, -\distGraph)(K1) {$\input{big-tree-K1.tex}$};
        \node [draw=none] at (\EqCenter, -\distGraph-\distEq)(K2) {$\input{big-tree-K2.tex}$};
    }
\end{tikzpicture}

    \vspace{-9.9cm}
    \caption{Illustration of the sparsity of the control law in \cref{eq:control-law} for a larger transportation network with a tree structure. The graph represents the network topology, with one vertex per storage facility, and one directed edge per transportation link. The symbol $\boldsymbol{\star}$ represents the non-zero entries in the matrices that describe the control law. The sparsity pattern in the matrices corresponds closely to the topology of the graph. For example, consider the top left corner of the graph where the control sequence $u_1$ is located. Since it corresponds to a leaf edge, it does not depend on any other control sequence. All that is needed are the measurements $y_1\sqfunof{k}$, $u_1\sqfunof{k-1}$, $y_4\sqfunof{k}$, and $u_4\sqfunof{k-1}$, all of which are all located in close proximity. The sequence $u_1$ then affects the cluster $u_2-u_5$, where it is needed in conjunction with the computation of $u_4$. Note that whenever there is a cluster of edges going out from a vertex, there is a corresponding dense block in the matrix $K_1$.}
    \label{fig:big-tree}
\end{figure*}

\subsection{LQR problems with a tree structure}\label{sec:more-general-lqr}

In this section we will describe how to extend the previous example to transportation networks with an underlying tree structure. We will present the resulting controller sparsity patterns for a few cases that illustrate how both the topology and direction of transportation links affect the sparsity in the factorisation of the control law. Note that the extensions here are the simplest possible, allowing many of the previously derived formulas from \cref{sec:simple-lqr} to be used without change. These results can be significantly generalised without compromising the resulting sparsity properties of the control laws. 

Consider again the transportation network illustrated in \Cref{fig:state-graph}, but suppose that the storage facilities are connected with an underlying tree topology. That is, the graph that characterises the transportation network topology (\Cref{fig:example-graph} for the setup in \cref{sec:simple-lqr}) is a tree graph rather than a line graph. Following \cref{sec:lqrsetup}, an \gls{lqr} problem can be associated with any such transportation network. Note that by appropriately updating the matrices $A$, $B$ and $C$ this problem takes the same form as in \cref{eq:simple-lqr}, and all the intermediate formulas can continue to be used, including the variable transform in \cref{eq:variable-transform} (it turns out that in this case $A = A^2$, meaning that \cref{eq:w} can also continue to be used without modification). 

Proceeding as in \cref{sec:lqrleastsquares} allows the \gls{lqr} problem to be converted into a least squares problem on the form in \cref{eq:ls-problem}. The resulting $\opfont{M}$ has sparsity pattern structured by the graph that describes the transportation network topology, just as in \Cref{fig:example-graph}. More specifically, the graph has one vertex for every storage facility, and one directed edge for each transportation link. The matrix $\opfont{M}$ then has one row for each vertex (storage facility), and one column for each edge in the graph (transportation link). Each outflow from a vertex is represented by an entry $-\opfont{1}$, and each inflow by an entry $r\shiftop^*$ modelling the unit delay associated with the transportation. 

The resulting least squares problems can then be solved as in \cref{sec:lqrbacksub,sec:lqrstructurecontrol}. We now present the sparsity patterns for the resulting structured control laws for a few special cases. The transportation network topologies and controller sparsity patterns are illustrated in \Cref{fig:big-tree,fig:small-examples}.

\subsubsection{A larger example}
To highlight the sparsity inherent in the factorisation of the control law in \cref{eq:control-law}, we study a fairly large transportation network consisting of 21 storage facilities arranged in a tree structure as depicted in the graph in \Cref{fig:big-tree}. The sparsity patterns of $K_1$ and $K_2$ are shown below the graph. Observe that the locations of the non-zero entries closely mirrors the local structure in the graph.

\subsubsection{The role of transportation direction}
An aspect that makes the control for the transportation network in \Cref{fig:big-tree} so sparse is that all transportation leaves a central storage facility corresponding to vertex $y_{21}$. The control law becomes slightly more dense when there are multiple transportation routes that can supply a storage facility. To get a feeling for how the direction impacts the sparsity, compare \Cref{fig:small-examples}a.) with \Cref{fig:small-examples}b.), as well as \Cref{fig:small-examples}c.) with the introductory example in \cref{sec:intro}.

\subsubsection{Control law without sparse spectral factor}\label{sec:tvspecfac}

As was established in \cref{sec:spec}, in some cases no sparsity preserving spectral factorisation exists. This is the case for the transportation network illustrated by the graph in \Cref{fig:small-examples}c.). We see that despite this a structured control law can be obtained by exploiting the sparsity properties of the operator factorisation. As in all the other examples, the gain $K_1^{-1} K_2$ is equal the classic \gls{lqr} state-feedback gain. Note that in this case \Cref{thm:control-law} cannot be used directly to compute $K_1$ and $K_2$. However it is in fact always possible to obtain a sparsity exploiting factorisation given any $\opfont{M}$ as outlined in this section. Describing the details precisely is tricky, so we omit them here, but the provided software implementation covers the general case.

\begin{figure}
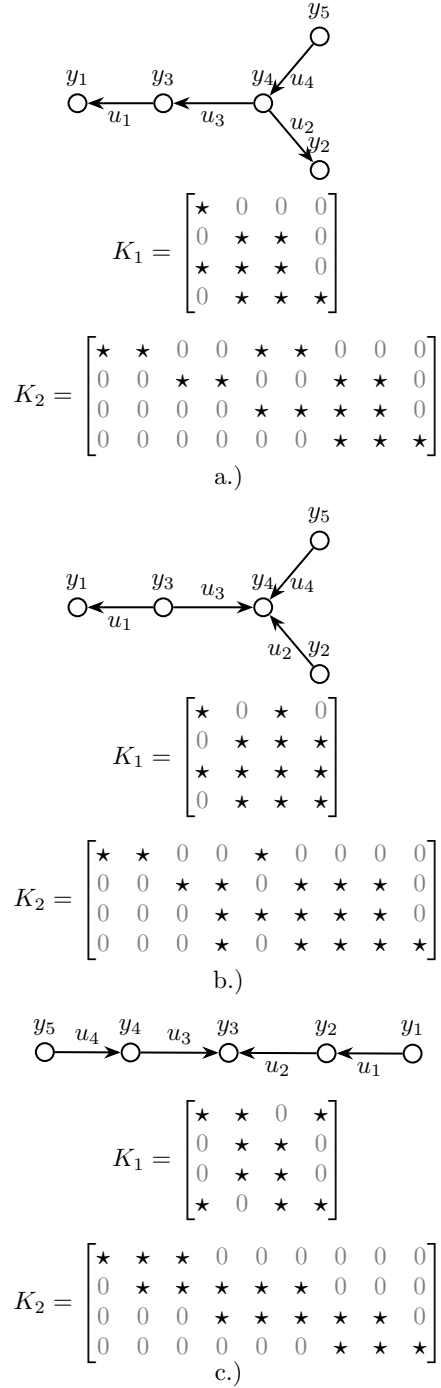

    \centering
    \scalebox{0.95}{
    \begin{tikzpicture}[
		every node/.append style={circle,draw=black, text=black,inner sep=0pt, minimum size=2.5mm}
	]

    \pgfmathsetmacro{\distGraph}{2.1}
    \pgfmathsetmacro{\distEq}{2}

	\node [label={$y_{1}$}]at (-2.115154e+00,4.440892e-16)(y1) { };
	\node [label={$y_{2}$}]at (1.273561e+00,-9.350289e-01)(y2) { };
	\node [label={$y_{3}$}]at (-9.151542e-01,2.775558e-16)(y3) { };
	\node [label={$y_{4}$}]at (4.831865e-01,-5.551115e-17)(y4) { };
	\node [label={$y_{5}$}]at (1.273561e+00,9.350289e-01)(y5) { };
	\path (y3) edge[-Stealth] node[auto, draw=none] {$u_{1}$} (y1);
	\path (y4) edge[-Stealth] node[auto, draw=none] {$u_{2}$} (y2);
	\path (y4) edge[-Stealth] node[auto, draw=none] {$u_{3}$} (y3);
	\path (y5) edge[-Stealth] node[auto, draw=none] {$u_{4}$} (y4);

    \node [draw=none] at (0, -\distGraph)(K1) {$\input{unidirectional-tree-K1.tex}$};
    \node [draw=none] at (0, -\distGraph-\distEq)(K2) {$\input{unidirectional-tree-K2.tex}$};
    \node [draw=none] at (0, -2.5*\distGraph)(T) {a.)};
\end{tikzpicture}}

    \vspace{-1.6cm}

    \scalebox{0.95}{
    \begin{tikzpicture}[
		every node/.append style={circle,draw=black, text=black,inner sep=0pt, minimum size=2.5mm}
	]

    \pgfmathsetmacro{\distGraph}{2.1}  
    \pgfmathsetmacro{\distEq}{2}  

	\node [label={$y_{1}$}]at (-2.115154e+00,4.440892e-16)(y1) { };
	\node [label={$y_{2}$}]at (1.273561e+00,-9.350289e-01)(y2) { };
	\node [label={$y_{3}$}]at (-9.151542e-01,2.775558e-16)(y3) { };
	\node [label={$y_{4}$}]at (4.831865e-01,-5.551115e-17)(y4) { };
	\node [label={$y_{5}$}]at (1.273561e+00,9.350289e-01)(y5) { };
	\path (y3) edge[-Stealth] node[auto, draw=none] {$u_{1}$} (y1);
	\path (y2) edge[-Stealth] node[auto, draw=none] {$u_{2}$} (y4);
	\path (y3) edge[-Stealth] node[auto, draw=none] {$u_{3}$} (y4);
	\path (y5) edge[-Stealth] node[auto, draw=none] {$u_{4}$} (y4);

    \node [draw=none] at (0, -\distGraph)(K1) {$\input{bidirectional-tree-K1.tex}$};
    \node [draw=none] at (0, -\distGraph-\distEq)(K2) {$\input{bidirectional-tree-K2.tex}$};
    \node [draw=none] at (0, -2.5*\distGraph)(T) {b.)};
\end{tikzpicture}}

    \vspace{-1.5cm}
    \scalebox{0.95}{
    \begin{tikzpicture}[
		every node/.append style={circle,draw=black, text=black,inner sep=0pt, minimum size=2.5mm}
	]

    \pgfmathsetmacro{\distGraph}{1.5}
    \pgfmathsetmacro{\distEq}{2}
        
	\node [label={$y_{1}$}]at (2.574058e+00,-1.072717e-02)(y1) { };
	\node [label={$y_{2}$}]at (1.370557e+00,-5.711694e-03)(y2) { };
	\node [label={$y_{3}$}]at (-9.408947e-05,3.714811e-07)(y3) { };
	\node [label={$y_{4}$}]at (-1.372265e+00,5.718807e-03)(y4) { };
	\node [label={$y_{5}$}]at (-2.572255e+00,1.071969e-02)(y5) { };
	\path (y1) edge[-Stealth] node[auto, draw=none] {$u_{1}$} (y2);
	\path (y2) edge[-Stealth] node[auto, draw=none] {$u_{2}$} (y3);
	\path (y4) edge[-Stealth] node[auto, draw=none] {$u_{3}$} (y3);
	\path (y5) edge[-Stealth] node[auto, draw=none] {$u_{4}$} (y4);

    \node [draw=none] at (0, -\distGraph)(K1) {$\input{tv-string-K1.tex}$};
    \node [draw=none] at (0, -\distGraph-\distEq)(K2) {$\input{tv-string-K2.tex}$};
    \node [draw=none] at (0, -3*\distGraph)(T) {c.)};

\end{tikzpicture}}

    \vspace{-2cm}

    \caption{Illustration of the effect of transportation direction on the sparsity of the control law in \cref{eq:control-law}. Just as in \Cref{fig:big-tree}, the graph illustrates the topology of the transportation network, and the $\boldsymbol{\star}$ symbol the non-zero entries in the control law \cref{eq:control-law}. Note that the graphs in a.) and b.) are the same except for the direction of the edges. The graph in b.) has multiple edges entering the same vertex. This makes the control law less sparse in general. The graph in c.) shows the same characteristic, however this time it comes from the fact that the underlying factorisation is not a spectral factor (no sparsity preserving spectral factor exists, as explained in \cref{sec:spec}).}
    \label{fig:small-examples}
\end{figure}

\section{Conclusions}\label{sec:conclusions}

In this paper we have developed a computational framework for performing  Cholesky factorisation for matrices
of shift operators. The tools allow for exact solution of least squares problems in a sparsity exploiting manner. We have given examples where the Cholesky algorithm produces spectral factors, however we also proved that there are matrices for which no sparsity preserving spectral factorisation exists. We use our results to reveal sparsity properties in the control laws of particular \gls{lqr} problems associated with transportation.

\section*{Acknowledgement}
The authors thank Anders Hansson for his support throughout the
project and in particular for the feedback he has given on an early draft
of this paper.

\bibliographystyle{unsrt}
\bibliography{references}

\appendix
\section{Triangular factorisations preserve sparsity}
The following lemma is used in the proof of \Cref{thm:specfac}. The lemma concerns $n\times{}n$ matrices, where each entry is an operator on a Hilbert space $\mathcal{H}$. It shows that lower-upper factorisations (such as the Cholesky factorisation) always produce factors with the same sparsity pattern, and further that these factors always satisfy a formula involving the diagonal entries of the factors.

\begin{lemma}\label{lem:triangular}
Let $\mathcal{H}$ be a Hilbert space. If $\opfont{L}_1:\mathcal{H}^n\rightarrow{}\mathcal{H}^n$ and  $\opfont{L}_2:\mathcal{H}^n\rightarrow{}\mathcal{H}^n$ are invertible and lower triangular, and
\begin{equation}\label{eq:sim}
\opfont{L}_1\opfont{L}_1^*=\opfont{L}_2\opfont{L}_2^*,
\end{equation}
then $\opfont{L}_2=\opfont{L}_1\mathrm{diag}\,\funof{\opfont{L}_1}^{-1}\mathrm{diag}\,\funof{\opfont{L}_2}$.
\end{lemma}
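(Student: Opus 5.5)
The plan is to reduce the statement to the fact that an invertible matrix of operators which is both lower triangular and whose adjoint's inverse is lower triangular must be diagonal. The argument needs three preliminary facts about invertible triangular operator matrices, then a short algebraic consequence of \cref{eq:sim}.

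First, if $\opfont{L}$ is invertible and lower triangular, each diagonal entry $\opfont{L}_{ii}$ is invertible on $\mathcal{H}$. Second, $\opfont{L}^{-1}$ is again lower triangular. Third, the diagonal of $\opfont{L}^{-1}$ is $\mathrm{diag}\,\funof{\opfont{L}}^{-1}$.

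For the first fact, a diagonal entry of a triangular operator matrix can fail to be invertible even though the whole matrix is, so this must be checked carefully. I would argue by induction on $n$ using the block form
\[
\opfont{L}=\begin{bmatrix}\opfont{A}&\opfont{0}\\ \opfont{C}&\opfont{D}\end{bmatrix}.
\]
Write $\opfont{L}^{-1}=\begin{bmatrix}\opfont{X}&\opfont{Y}\\ \opfont{Z}&\opfont{W}\end{bmatrix}$. The identity $\opfont{L}^{-1}\opfont{L}=\opfont{I}$ gives $\opfont{Y}\opfont{D}=\opfont{0}$, and $\opfont{L}\opfont{L}^{-1}=\opfont{I}$ gives $\opfont{A}\opfont{Y}=\opfont{0}$.

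This is the main obstacle. In infinite dimensions, one-sided invertibility of $\opfont{A}$ or $\opfont{D}$ does not follow automatically; $\shiftop$ itself is one-sided invertible but not invertible. Since the lemma as stated makes no extra assumption, I would first try to extract $\opfont{Y}=\opfont{0}$ from the relations above. If that fails, I would note that in the application (\Cref{thm:specfac}) the diagonal entries are invertible by construction, namely invertible elements of $\Rring$ or of $\R[\shiftop]$. In that case I would add the hypothesis that the diagonal entries are invertible, stating it explicitly as the intended reading.

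With invertible diagonal entries, forward substitution gives $\opfont{L}^{-1}$ as a lower triangular matrix with diagonal $\opfont{L}_{ii}^{-1}$. This settles the second and third facts.

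Now set $\opfont{T}=\opfont{L}_1^{-1}\opfont{L}_2$. It is lower triangular as a product of lower triangular matrices, and its diagonal is $\mathrm{diag}\,\funof{\opfont{L}_1}^{-1}\mathrm{diag}\,\funof{\opfont{L}_2}$. From \cref{eq:sim}, $\opfont{T}\opfont{T}^*=\opfont{I}$, and since $\opfont{T}$ is invertible this gives $\opfont{T}=\funof{\opfont{T}^*}^{-1}$. The right-hand side is the inverse of an upper triangular matrix, hence upper triangular by the adjoint version of the facts above. So $\opfont{T}$ is both lower and upper triangular, that is, diagonal.

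Therefore $\opfont{T}=\mathrm{diag}\,\funof{\opfont{T}}=\mathrm{diag}\,\funof{\opfont{L}_1}^{-1}\mathrm{diag}\,\funof{\opfont{L}_2}$. Multiplying on the left by $\opfont{L}_1$ gives the claimed formula.

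As a byproduct, diagonality of $\opfont{T}$ together with $\opfont{T}\opfont{T}^*=\opfont{I}$ gives $\mathrm{diag}\,\funof{\opfont{L}_2}\mathrm{diag}\,\funof{\opfont{L}_2}^*=\mathrm{diag}\,\funof{\opfont{L}_1}\mathrm{diag}\,\funof{\opfont{L}_1}^*$. I would record this, since it is the form used in \Cref{thm:specfac}. I would also note there the consequence that $\opfont{L}_1$ and $\opfont{L}_2$ have the same sparsity pattern, because right multiplication by an invertible diagonal matrix does not change which entries vanish.
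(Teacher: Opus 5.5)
Your argument follows the paper's proof. Set $\opfont{T}=\opfont{L}_1^{-1}\opfont{L}_2$. It is lower triangular with $\opfont{T}\opfont{T}^*=\opfont{I}$, so $\opfont{T}^*=\opfont{T}^{-1}$ is triangular of the other type, and $\opfont{T}$ is diagonal. The paper says $\opfont{L}_2^{-1}\opfont{L}_1$ is lower triangular; you say $\funof{\opfont{T}^*}^{-1}$ is upper triangular; this is the same step.

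The real difference is that you refuse to assume that an invertible lower triangular operator matrix has invertible diagonal entries and a lower triangular inverse. The paper asserts exactly this in its first sentence. Your suspicion is correct, and you should drop the hedge about first trying to extract $\opfont{Y}=\opfont{0}$. That cannot work: your ``first fact'' is false, and so is the lemma as stated. On $\ltwo^2$ take
\[
\opfont{U}=\begin{bmatrix}\shiftop&\opfont{0}\\ \opfont{1}-\shiftop^*\shiftop&\shiftop^*\end{bmatrix}.
\]
Three facts give $\opfont{U}\opfont{U}^*=\opfont{U}^*\opfont{U}=\opfont{I}$: $\shiftop\shiftop^*=\opfont{1}$, $\shiftop\funof{\opfont{1}-\shiftop^*\shiftop}=\opfont{0}$, and $\opfont{1}-\shiftop^*\shiftop$ is the orthogonal projection onto the first coordinate. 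Then $\opfont{L}_1=\opfont{I}$ and $\opfont{L}_2=\opfont{U}$ satisfy every hypothesis of the lemma. Yet $\opfont{U}\neq\mathrm{diag}\,\funof{\opfont{U}}$, so even the sparsity patterns differ. The diagonal entries $\shiftop,\shiftop^*$ are not invertible, and $\opfont{U}^{-1}=\opfont{U}^*$ is upper triangular. In your block notation, $\opfont{A}=\shiftop$ is onto but not injective and $\opfont{D}=\shiftop^*$ is injective but not onto. Then $\opfont{Y}=\opfont{1}-\shiftop^*\shiftop\neq\opfont{0}$ satisfies both $\opfont{A}\opfont{Y}=\opfont{0}$ and $\opfont{Y}\opfont{D}=\opfont{0}$.

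So the extra hypothesis is necessary, not just an ``intended reading''. It must be imposed on the diagonals of both $\opfont{L}_1$ and $\opfont{L}_2$, since the counterexample has $\mathrm{diag}\,\funof{\opfont{L}_1}=\opfont{I}$. With it, forward substitution gives your second and third facts. The rest of your proof is then complete and correct, including the byproduct $\mathrm{diag}\,\funof{\opfont{L}_2}\mathrm{diag}\,\funof{\opfont{L}_2}^*=\mathrm{diag}\,\funof{\opfont{L}_1}\mathrm{diag}\,\funof{\opfont{L}_1}^*$ and the sparsity remark.

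One caution on your comment about \Cref{thm:specfac}: the hypothesis does not hold there ``by construction''. For the factor from \Cref{alg:chol}, it follows from invertibility of the factor because the diagonal entries lie in $\Rring$ and are self-adjoint. Surjectivity forces $\opfont{L}_{11}$ to be onto, hence injective, and you can induct. For the factor with entries in $\R[\shiftop]$, only invertibility of the whole matrix is assumed, so you need a separate argument. One can be given using the fact that those entries commute.
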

\begin{proof}
    First note that since $\opfont{L}_1$ and $\opfont{L}_2$ are invertible and lower triangular, the matrices of operators $\opfont{L}_1^{-1}\opfont{L}_2$ and $\opfont{L}_2^{-1}\opfont{L}_1$ are lower triangular. It follows from \cref{eq:sim} that
        \[
    \opfont{L}_1^{-1}\opfont{L}_2\funof{\opfont{L}_1^{-1}\opfont{L}_2}^*=\opfont{I}\;\;\text{and}\;\;
\opfont{L}_2^{-1}\opfont{L}_1\funof{\opfont{L}_2^{-1}\opfont{L}_1}^*=\opfont{I}.
    \]
    This implies that $\opfont{L}_1^{-1}\opfont{L}_2=\mathrm{diag}\,\funof{\opfont{L}_1^{-1}\opfont{L}_2}$. Finally since $\opfont{L}_1^{-1}$ and $\opfont{L}_2$ are lower triangular,
    \[
    \mathrm{diag}\,\funof{\opfont{L}_1^{-1}\opfont{L}_2}=\mathrm{diag}\,\funof{\opfont{L}_1}^{-1}\mathrm{diag}\,\funof{\opfont{L}_2},
    \]
    and the proof is complete. 
\end{proof}

\end{document}